\newtheorem{thm}{Theorem}[section]
\newtheorem{lem}[thm]{Lemma}
\theoremstyle{definition}
\newcommand{\scr}[1]{\mathscr #1}
\definecolor{wco}{rgb}{0.5,0.2,0.3}
\numberwithin{equation}{section} \theoremstyle{remark}
\newcommand{\ua}{\uparrow}
\title{{\bf  Distribution Dependent SDEs with Singular Coefficients}\footnote{Supported in
 part by  NNSFC (11771326, 11431014).} }
\author{
{\bf   Xing Huang $^{a)}$,  Feng-Yu Wang $^{a), b)}$  }\\
\footnotesize{ a)Center for Applied Mathematics, Tianjin
University, Tianjin 300072, China}\\
\footnotesize{  xinghuang@tju.edu.cn}\\
 \footnotesize{ b)Department of Mathematics,
Swansea University, Singleton Park, SA2 8PP, United Kingdom}\\
\footnotesize{  wangfy@tju.edu.cn}}
\begin{document}
\allowdisplaybreaks
\def\R{\mathbb R}  \def\ff{\frac} \def\ss{\sqrt} \def\B{\mathbf
B} \def\W{\mathbb W}
\def\N{\mathbb N} \def\kk{\kappa} \def\m{{\bf m}}
\def\ee{\varepsilon}\def\ddd{D^*}
\def\dd{\delta} \def\DD{\Delta} \def\vv{\varepsilon} \def\rr{\rho}
\def\<{\langle} \def\>{\rangle} \def\GG{\Gamma} \def\gg{\gamma}
  \def\nn{\nabla} \def\pp{\partial} \def\E{\mathbb E}
\def\d{\text{\rm{d}}} \def\bb{\beta} \def\aa{\alpha} \def\D{\scr D}
  \def\si{\sigma} \def\ess{\text{\rm{ess}}}
\def\beg{\begin} \def\beq{\begin{equation}}  \def\F{\scr F}
\def\Ric{\text{\rm{Ric}}} \def\Hess{\text{\rm{Hess}}}
\def\e{\text{\rm{e}}} \def\ua{\underline a} \def\OO{\Omega}  \def\oo{\omega}
 \def\tt{\tilde} \def\Ric{\text{\rm{Ric}}}
\def\cut{\text{\rm{cut}}} \def\P{\mathbb P} \def\ifn{I_n(f^{\bigotimes n})}
\def\C{\scr C}      \def\aaa{\mathbf{r}}     \def\r{r}
\def\gap{\text{\rm{gap}}} \def\prr{\pi_{{\bf m},\varrho}}  \def\r{\mathbf r}
\def\Z{\mathbb Z} \def\vrr{\varrho} \def\ll{\lambda}
\def\L{\scr L}\def\Tt{\tt} \def\TT{\tt}\def\II{\mathbb I}
\def\i{{\rm in}}\def\Sect{{\rm Sect}}  \def\H{\mathbb H}
\def\M{\scr M}\def\Q{\mathbb Q} \def\texto{\text{o}} \def\LL{\Lambda}
\def\Rank{{\rm Rank}} \def\B{\scr B} \def\i{{\rm i}} \def\HR{\hat{\R}^d}
\def\to{\rightarrow}\def\l{\ell}\def\iint{\int}
\def\EE{\scr E}\def\Cut{{\rm Cut}}
\def\A{\scr A} \def\Lip{{\rm Lip}}
\def\BB{\scr B}\def\Ent{{\rm Ent}}\def\L{\scr L}
\def\R{\mathbb R}  \def\ff{\frac} \def\ss{\sqrt} \def\B{\mathbf
B}
\def\N{\mathbb N} \def\kk{\kappa} \def\m{{\bf m}}
\def\dd{\delta} \def\DD{\Delta} \def\vv{\varepsilon} \def\rr{\rho}
\def\<{\langle} \def\>{\rangle} \def\GG{\Gamma} \def\gg{\gamma}
  \def\nn{\nabla} \def\pp{\partial} \def\E{\mathbb E}
\def\d{\text{\rm{d}}} \def\bb{\beta} \def\aa{\alpha} \def\D{\scr D}
  \def\si{\sigma} \def\ess{\text{\rm{ess}}}
\def\beg{\begin} \def\beq{\begin{equation}}  \def\F{\scr F}
\def\Ric{\text{\rm{Ric}}} \def\Hess{\text{\rm{Hess}}}
\def\e{\text{\rm{e}}} \def\ua{\underline a} \def\OO{\Omega}  \def\oo{\omega}
 \def\tt{\tilde} \def\Ric{\text{\rm{Ric}}}
\def\cut{\text{\rm{cut}}} \def\P{\mathbb P} \def\ifn{I_n(f^{\bigotimes n})}
\def\C{\scr C}      \def\aaa{\mathbf{r}}     \def\r{r}
\def\gap{\text{\rm{gap}}} \def\prr{\pi_{{\bf m},\varrho}}  \def\r{\mathbf r}
\def\Z{\mathbb Z} \def\vrr{\varrho} \def\ll{\lambda}
\def\L{\scr L}\def\Tt{\tt} \def\TT{\tt}\def\II{\mathbb I}
\def\i{{\rm in}}\def\Sect{{\rm Sect}}  \def\H{\mathbb H}
\def\M{\scr M}\def\Q{\mathbb Q} \def\texto{\text{o}} \def\LL{\Lambda}
\def\Rank{{\rm Rank}} \def\B{\scr B} \def\i{{\rm i}} \def\HR{\hat{\R}^d}
\def\to{\rightarrow}\def\l{\ell}
\def\8{\infty}\def\I{1}\def\U{\scr U}
\maketitle

\begin{abstract} Under  integrability conditions on distribution dependent coefficients, existence and uniqueness are proved for McKean-Vlasov type SDEs with non-degenerate noise. When the coefficients are Dini continuous in the space variable, gradient estimates and Harnack type inequalities are derived. These generalize the corresponding results derived for classical SDEs, and are new   in the distribution dependent setting.

\end{abstract} \noindent
 AMS subject Classification:\  60H1075, 60G44.   \\
\noindent
 Keywords: Distribution dependent SDEs, Krylov's estimate, Zvonkin's transform,  log-Harnack inequality.
 \vskip 2cm

\section{Introduction}
In order to  characterize  nonlinear Fokker-Planck equations using SDEs,   distribution dependent SDEs have been intensively investigated,
see \cite{SZ, MV} and references within for McKean-Vlasov type SDEs, and \cite{DV1,DV2, CA} and references within for Landau type equations. To ensure the existence and uniqueness of these type SDEs,  growth/regularity conditions  are used. On the other hand, however, due to Krylov's estimate and  Zvonkin's transform, the well-posedness of classical SDEs is proved under an integrability condition, which allows  the drift unbounded on compact sets.
The purpose of this paper is to extend this result to the distribution dependent situation, and to establish gradient estimates and Harnack type inequalities for the distributions under Dini continuity of the drift, which is much weaker than the Lipschitz  condition  used in \cite{FYW1, HRW}.

Let $\scr P$ be the set of all probability measures on $\R^d$. Consider the following distribution-dependent SDE on $\R^d$:
\beq\label{E1} \d X_t= b_t(X_t, \L_{X_t})\d t +\si_t(X_t, \L_{X_t})\d W_t,\end{equation}
where $W_t$ is the $d$-dimensional Brownian motion on a complete filtration probability space $(\OO,\{\F_t\}_{t\ge 0},\P)$, $\L_{X_t}$ is the law of $X_t$, and
$$b: \R_+\times\R^d\times \scr P\to \R^d,\ \ \si: \R_+\times\R^d\times \scr P\to \R^d\otimes\R^d$$ are measurable. When a different probability measure $\tt\P$ is concerned, we use $\L_\xi|\tt \P$ to denote the law of a random variable $\xi$ under the probability $\tt\P$.

By using a priori Krylov's estimate, a weak solution can be constructed  for \eqref{E1} by using  an approximation argument as in the classical setting, see \cite{GM} and references within. To prove the existence of strong solution,  we use  a fixed distribution $\mu_t$ to replace  the law of solution $\L_{X_t}$, so that the distribution SDE \eqref{E1}   reduces to the classical one. We   prove that when  the reduced SDE has strong uniqueness, the weak solution of  \eqref{E1} also provides a strong solution. We will then use   Zvonkin's transform to investigate the uniqueness, for which we first identify the distributions of given two solutions, so that these  solutions  solve the common reduced SDE, and thus, the pathwise uniqueness follows from existing argument developed for the classical SDEs.    However, there is essential difficulty to identify
the distributions of two solutions of \eqref{E1}. Once we have constructed the desired Zvonkin's transform for \eqref{E1} with singular coefficients, gradient estimates and Harnack type inequalities can be proved as in the regular situation considered in \cite{FYW1}.

The remainder of the paper is organized as follows. In Section 2 we summarize the main results of the paper. To prove these results,   some preparations are addressed in Section 3, including a new Krylov's estimate, two lemmas on weak convergence of stochastic processes, and a result on the existence of strong solutions for distribution dependent SDEs. Finally,  the main results are proved in Sections 4 and 5.


\section{Main results}

 We first recall Krylov's estimate in the study of SDEs.  We will fix a constant $T>0$, and only consider solutions of \eqref{E1} up to time $T$..
 For a measurable function $f$ defined on $[0,T]\times\mathbb{R}^d$, let
$$\|f\|_{L^q_p(s,t)}=\left(\int_s^t\left(\int_{\mathbb{R}^d}|f_r(x)|^p\d x\right)^{\frac{q}{p}}\d r\right)^{\frac{1}{q}}, \ \ p,q\ge 1, 0\le s\le t\le T. $$ When $s=0$, we simply denote   $\|f\|_{L^q_p(0,t)}=\|f\|_{L^q_p(t)}$. A key step in the study of singular SDEs is to establish Krylov type estimate (see for instance \cite{KR}).
For later use we introduce the following notion of $K$-estimate. 
We consider the following class of number pairs $(p,q)$:
   $$\scr K:=\Big\{(p,q)\in (1,\infty)\times(1,\infty):\   \ff d p +\ff 2 q<2\Big\}.$$

\beg{defn}[Krylov's Estimate]  \emph{An $\F_t$-adapted process $\{X_{s}\}_{0\le s\le T}$ is said to satisfy $K$-estimate, if   for any $(p,q)\in \scr K$, there exist    constants $\dd\in (0,1)$ and
$C>0$ such that for any nonnegative measurable function $f$ on $[0,T]\times \R^d$,
\beq\label{KR1} \E\bigg(\int_s^t f_r(X_r) \d r\Big| \F_s\bigg) \le C (t-s)^\dd \|f\|_{L_p^q(T)},\ \ \ 0\le s\le t\le T. \end{equation}}
\end{defn}
We note that \eqref{KR1} implies the following Khasminskii type estimate, see for instance \cite[Lemma 3.5]{XZ} and it's proof: there exists a constant $c>0$ such that
\beq\label{APP3} \E\bigg(\bigg(\int_s^t f_r(X_r) \d r\bigg)^n\Big| \F_s\bigg) \le c n! (t-s)^{\dd n}\|f\|_{L_p^q(T)}^n,\ \ \ 0\le s\le t\le T, \end{equation} and
for any $\ll>0$ there exists a constant $\LL=\LL(\ll,\dd,c)>0$  such that
\beq\label{KR2}   \E\big(\e^{\ll\int_0^T f_r(X_r) \d r}\big| \F_s\big) \le \e^{\LL  \left(1+\|f\|_{L_p^q(T)}\right)},\ \ s\in[0,T]. \end{equation}

Let $\theta\in [1,\infty)$, we will consider the SDE \eqref{E1} with initial distributions in the class
$$\scr P_\theta := \big\{\mu\in \scr P: \mu(|\cdot|^\theta)<\infty\big\}.$$    It is well known that
$\scr P_\theta$ is a Polish space under the Warsserstein distance
$$\W_\theta(\mu,\nu):= \inf_{\pi\in \C(\mu,\nu)} \bigg(\int_{\R^d\times\R^d} |x-y|^\theta \pi(\d x,\d y)\bigg)^{\ff 1 {\theta}},\ \ \mu,\nu\in \scr P_{\theta},$$ where $\C(\mu,\nu)$ is the set of all couplings of $\mu$ and $\nu$.  Moreover,   the topology induced by $\W_\theta$ on $\scr P_\theta$ coincides with the weak topology.

In the following three subsections,  we state our main results   on the existence, uniqueness and Harnack type inequalities respectively  for the distribution dependent SDE \eqref{E1}.

\subsection{Existence and uniqueness}

Let $$\scr P_\theta^a=\big\{\mu\in \scr P_\theta: \mu \text{\ is\ absolutely\ continuous\ with\ respect\ to the Lebesgue measure\ }\big\}.$$

To construct a weak solution of \eqref{E1} by using approximation argument as in \cite{GM, MV}, we need the following assumptions for some $\theta\ge 1$.

\beg{enumerate} \item[$(H^\theta)$]  There exists a sequence $(b^n,\sigma^n)_{n\ge 1}$, where
 $$b^n: [0,T]\times\R^d\times \scr P_\theta \to \R^d,\ \  \sigma^n: [0,T]\times\R^d\times \scr P_\theta \to \R^d\otimes \R^d $$
 are measurable, such that the following conditions hold:
 \item[$\ (1)$]  For $ \mu\in \scr P_\theta^a$ and $\mu^n\to \mu$ in $\scr P_\theta$,
$$  \lim_{n\to\infty}  \big\{ |b_t^n(x,\mu^n)-b_t(x,\mu)|+ \|\sigma^n_t(x,\mu^n)- \sigma_t(x,\mu)\| \big\} =0,\ \ \text{a.e.}\ \  (t,x)\in [0,T]\times\R^d.  $$
\item[$\ (2)$]  There exist $K>1$,   $(p,q)\in \scr K$ and  nonnegative $G\in  L_p^q(T)$
such that     for any $n\ge 1$,
 $$ |b_t^n(x,\mu)|^2\le G(t,x)+K,\ \ K^{-1} I \le (\sigma^n_t(\sigma^n_t)^\ast)(x,\mu)\le K I$$
 for all $(t,x,\mu)\in [0,T]\times \R^d\times \scr P_\theta.$
\item[$\ (3)$]    For each $n\ge 1$, there exists a constant $K_n>0$ such that $\|b^n\|_{\infty}\le K_n$ and
 \begin{equation}\label{con}\beg{split}
&|b_t^n(x,\mu)-b_t^n(y,\nu)|+ \|\sigma^n_t(x,\mu)- \sigma_t^n(y,\nu)\|\\
&\le K_n\big\{|x-y|+ \W_\theta(\mu,\nu)\big\},\ \  (t,x,y)\in [0,T]\times\R^d\times\R^d,\ \mu,\nu\in \scr P_\theta.\end{split}
\end{equation}
\end{enumerate}
 The main result in this part is the following.

\begin{thm}\label{T1.1} Assume $(H^\theta)$ for some constant $\theta\ge 1$.   Let $X_0$ be an $\F_0$-measurable   random variable on $\R^d$ with $\mu_0:=\L_{X_0}\in \scr P_\theta$. Then the following assertions hold.
\beg{enumerate} \item[$(1)$] The SDE $\eqref{E1}$ has a weak solution with initial distribution $\mu_0$ satisfying $ \L_{X_\cdot} \in C([0,T];\scr P_\theta)$ and the $K$-estimate.
\item[$(2)$] If $\sigma$ is uniformly continuous in $x\in\mathbb{R}^d$ uniformly with respect to $(t,\mu)\in[0,T]\times\scr P_{\theta},$ and
 for any $\mu_\cdot\in C([0,T]; \scr P_\theta)$, $b^\mu_t(x):= b_t(x, \mu_t)$ and $\si^\mu_t(x):= \si_t(x,\mu_t)$ satisfy
$| b^\mu|^2+\|\nn \si^\mu\|^2 \in L_p^q(T)$ for some $(p,q)\in \scr K$, where $\nn$ is the weak gradient in the space variable $x\in \R^d$, then the SDE \eqref{E1} has a strong solution satisfying $\L_{X_\cdot}\in C([0,T];\scr P_\theta)$ and the $K$-estimate.
\item[$(3)$] If, in addition to the condition in $(2)$,  there exists a  constant  $L\,>0$ such that
\beq\label{LIP} \|\si_t(x,\mu)-\si_t(x,\nu)\|+ |b_t(x,\mu)-b_t(x,\nu)|\le L\, \W_\theta(\mu,\nu)\end{equation} holds for all $ \mu,\nu\in \scr P_\theta$ and $ (t,x)\in [0,T]\times \R^d,$  then  the strong solution is unique. \end{enumerate}
\end{thm}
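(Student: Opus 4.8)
The plan is to prove that two strong solutions $X$ and $Y$ of \eqref{E1} carried by a common filtered probability space, driven by the same Brownian motion $W$, with $X_0=Y_0$ a.s., necessarily coincide. As explained in the Introduction, the only real issue is to show that they generate the \emph{same} marginal flow $\mu_t:=\L_{X_t}=\L_{Y_t}=:\nu_t$: once this is known, $X$ and $Y$ solve one and the same classical SDE (with the now-fixed coefficients $b_t(\cdot,\mu_t),\si_t(\cdot,\mu_t)$), and pathwise uniqueness for it is supplied by the classical theory of singular SDEs already used in part~(2). Write $b^\mu_t(x)=b_t(x,\mu_t)$, $\si^\mu_t(x)=\si_t(x,\mu_t)$ and likewise for $\nu$. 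I would first record, as a preparatory lemma quoted from \cite{FYW1} and the literature on Zvonkin's transform, that under the assumptions of part~(2) the $\R^d$-valued backward system
\[
\pp_t u_t+\ff12\,\mathrm{tr}\!\big(\si^\nu_t(\si^\nu_t)^{\ast}\nn^2u_t\big)+b^\nu_t\cdot\nn u_t-\ll u_t=-b^\nu_t,\qquad u_T=0,
\]
has, for $\ll$ large (depending only on the ellipticity constant $K$, the modulus of continuity of $\si$, and the $L^q_p(T)$-norms of $|b^\nu|^2,\|\nn\si^\nu\|^2$), a solution with $\nn u$ bounded, $\|\nn u\|_\infty\le\ff12$ and $\pp_t u,\nn^2u\in L^q_p(T)$; in particular $\Theta_t:=\mathrm{id}+u_t$ is, uniformly in $t$, a bi-Lipschitz diffeomorphism of $\R^d$, and $|\nn^2u|$, $\|\nn\si^\nu\|$ together with their Hardy--Littlewood maximal functions all have squares in $L^q_p(T)$ (here the hypothesis $|b^\nu|^2,\|\nn\si^\nu\|^2\in L^q_p(T)$, rather than merely $b^\nu,\nn\si^\nu\in L^q_p(T)$, is what gives this extra room).

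Next, apply It\^o's formula---legitimate by the $K$-estimate of part~(2) together with \eqref{APP3}--\eqref{KR2}---to $\xi_t:=\Theta_t(X_t)-\Theta_t(Y_t)$. Using the PDE, $\d\,\Theta_t(Y_t)=\ll u_t(Y_t)\,\d t+(\nn\Theta_t\,\si^\nu_t)(Y_t)\,\d W_t$, whereas
\[
\d\,\Theta_t(X_t)=\Big\{\ll u_t(X_t)+(b^\mu_t-b^\nu_t)(X_t)+(L^\mu_t-L^\nu_t)u_t(X_t)\Big\}\,\d t+(\nn\Theta_t\,\si^\mu_t)(X_t)\,\d W_t,
\]
with $L^\mu_t-L^\nu_t=\ff12\mathrm{tr}\big((\si^\mu_t(\si^\mu_t)^{\ast}-\si^\nu_t(\si^\nu_t)^{\ast})\nn^2\cdot\big)+(b^\mu_t-b^\nu_t)\cdot\nn\cdot$. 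The key point of applying the \emph{same} transform $\Theta=\Theta^\nu$ to \emph{both} solutions is that the $x$-singularity of $b$ disappears from the comparison while the entire measure-dependence is funnelled into the differences $b^\mu_t-b^\nu_t$ and $\si^\mu_t-\si^\nu_t$, which by \eqref{LIP} are pointwise dominated by $L\,\W_\theta(\mu_t,\nu_t)$.

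Subtracting and applying It\^o to $|\xi_t|^\theta$ (with the usual smoothing of $|\cdot|^\theta$ near the origin if $\theta<2$), I would estimate the drift and quadratic-variation terms by
\[
C\,(1+h_t)\big(|\xi_t|^\theta+\W_\theta(\mu_t,\nu_t)^\theta\big)\,\d t+\d M_t,
\]
$M$ a local martingale and $h_t=\varphi_t(X_t)+\varphi_t(Y_t)$ with $\varphi\in L^q_p(T)$ built from $|\nn^2u^\nu|$, $\|\nn\si^\nu\|$ and their maximal functions: the terms that are Lipschitz in $x$ (namely $\ll(u_t(X_t)-u_t(Y_t))$, the $\nn u$-part of $(L^\mu_t-L^\nu_t)u_t$, and the difference of $(\nn\Theta_t\si^\nu_t)$ at $X_t$ and $Y_t$, for which I would use $|f(x)-f(y)|\le C|x-y|\big(\mathcal M|\nn f|(x)+\mathcal M|\nn f|(y)\big)$) produce the $C(1+h_t)|\xi_t|^\theta$ part, while the measure-difference pieces (including $\ff12\mathrm{tr}((\si^\mu_t(\si^\mu_t)^{\ast}-\si^\nu_t(\si^\nu_t)^{\ast})\nn^2u_t)(X_t)$ and $(\nn\Theta_t)(X_t)(\si^\mu_t-\si^\nu_t)(X_t)$) produce $C(1+h_t)\W_\theta(\mu_t,\nu_t)^\theta$ after \eqref{LIP} and the boundedness of $\nn\Theta$. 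Multiplying by $\Gamma_t:=\exp(-C\int_0^t(1+h_s)\,\d s)$ removes the singular factor in front of $|\xi_t|^\theta$; \eqref{KR2}, applied to $X$ and to $Y$, makes all moments of $\Gamma_t^{-1}$ finite, and the canonical coupling $(X_t,Y_t)$ gives $\W_\theta(\mu_t,\nu_t)^\theta\le\E|X_t-Y_t|^\theta\le C\,\E|\xi_t|^\theta$ by the bi-Lipschitz property of $\Theta$. Feeding this back and running a Gronwall-type argument on the weighted quantity yields $\E|\xi_t|^\theta=0$ for all $t\in[0,T]$, hence $X\equiv Y$, and in particular $\mu_\cdot=\nu_\cdot$ as required.

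I expect the main obstacle to be precisely this step: identifying the two marginal flows. The device that makes it work is the transformation of both solutions by the Zvonkin map of one flow, which at once kills the $x$-singularity and channels all measure-dependence through $\W_\theta(\mu_t,\nu_t)$, which can then be re-absorbed via the trivial coupling of $X_t$ and $Y_t$. The accompanying technical point is bookkeeping: one must keep every singular weight in a Lebesgue class where \eqref{KR1} and \eqref{KR2} apply, and handle the $\theta$-dependent exponents (and the non-smoothness of $|\cdot|^\theta$ when $1\le\theta<2$) so that the exponential weight $\Gamma_t$ carries enough integrability to close the final Gronwall estimate.
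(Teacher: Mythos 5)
Your outline for part~(3) matches the paper's in its main steps: transform \emph{both} solutions by the Zvonkin map built from one fixed marginal flow, apply It\^o to $\xi_t=\Theta_t(X_t)-\Theta_t(Y_t)$, estimate the singular terms through Hardy--Littlewood maximal functions (Lemma~\ref{Hardy}) together with Krylov and Khasminskii estimates, and feed back the bound $\W_\theta(\mu_t,\nu_t)^\theta\le C\,\E|\xi_t|^\theta$ from the canonical coupling and the bi-Lipschitz property of the transform. (You use the $\nu$-flow for the transform; the paper uses the $\mu$-flow and writes it $\theta^{\lambda,\mu}$; this is only a relabeling.)

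The gap is in how you close the Gronwall loop. Multiplying $\d|\xi_t|^\theta\le C(1+h_t)\big(|\xi_t|^\theta+\W_\theta(\mu_t,\nu_t)^\theta\big)\,\d t+\d M_t$ by $\Gamma_t=\exp\big(-C\int_0^t(1+h_s)\,\d s\big)$ and taking expectations controls only the \emph{weighted} moment $\E\big[|\xi_t|^\theta\Gamma_t\big]$, whereas the Wasserstein bound that you then feed back involves the \emph{unweighted} $\E|\xi_t|^\theta$. Since $\Gamma_t$ is random, $\F_t$-measurable and correlated with $|\xi_t|^\theta$, there is no elementary passage from one to the other: a H\"older split introduces higher moments of $|\xi_t|^\theta\Gamma_t$ that you do not control, and a pathwise Gronwall leaves a cross term of the form $\E\big[\e^{C\int_0^t(1+h_s)\,\d s}\int_0^t(\cdots)\,\d M_s\big]$, which need not vanish because the exponential prefactor is $\F_t$-measurable. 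This is not bookkeeping; it is the crux. The paper's fix is to apply It\^o to $|\xi_t|^{2m}$ for an integer $m$ with $2m>\theta$, so that \eqref{NNP} takes the form $\d\eta_t\le\eta_t\,\d A_t+\d B_t+\d M_t$, and then to invoke the \emph{stochastic Gronwall lemma} of \cite[Lemma~3.8]{XZ}: taking the sub-one exponent $q=\theta/(2m)<1$ one obtains $(\E|\xi_t|^\theta)^{2m/\theta}\le c\,\big(\E\e^{\ff{c\theta}{2m-\theta}A_t}\big)^{\ff{2m-\theta}{\theta}}\int_0^t\W_\theta(\mu_s,\nu_s)^{2m}\,\d s$ as in \eqref{NN2}, the local-martingale contribution being controllable precisely because $q<1$. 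Working directly with $|\xi_t|^\theta$, as you do, forfeits the sub-one moment that this lemma requires, and the deterministic-looking exponential weight is not a substitute for it. (Using $|\xi_t|^{2m}$ with integer $m$ also removes your need to regularize $|\cdot|^\theta$ when $\theta<2$.)
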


When $b$ and $\si$ do not depend on the distribution, Theorem \ref{T1.1} reduces back to the corresponding results derived for classical SDEs with singular coefficients, see for instance \cite{Z2}  and references within.

To  compare Theorem \ref{T1.1} with recent results on the existence and uniqueness of McKean-Vlasov type SDEs derived in \cite{CR, MV}, we consider a specific class of coefficients where the dependence on distributions is of integral type.  For $\mu\in \scr P$ and a (possibly multidimensional valued) real function $f\in L^1(\mu)$, let $\mu(f)= \int_{\R^d} f \d\mu$.  Let
\beq\label{EX1}  b_t(x,\mu):= B_t(x,\mu(\psi_b(t,x,\cdot)),\ \
  \si_t(x,\mu):= \Sigma_t(x,\mu(\psi_\si(t,x,\cdot))\end{equation} for $(t,x,\mu)\in [0,T]\times\R^d\times \scr P_\theta,$
where  for some $k \in \mathbb N,$
$$\psi_b, \psi_\si: [0,T]\times\R^d\times\R^d \to \R^k$$ are measurable and bounded
such that for some constant $\dd>0$,
\beq\label{EX2} |\psi_b(t,x,y)-\psi_b(t,x,y')|+ |\psi_\si(t,x,y)-\psi_\si(t,x,y')| \le \dd |y-y'| \end{equation} holds for all $ (t,x)\in [0,T]\times\R^d$ and $y,y'\in \R^d,$
and $$B: [0,T]\times\R^d\times\R^k \to \R^d,\ \ \Sigma: [0,T]\times\R^d\times\R^k \to \R^d\otimes\R^d$$  are measurable and continuous in the third variable in $\R^k$. We make the following assumption.
 \begin{enumerate}
\item[\bf (A)] Let $(b,\si)$  in $\eqref{EX1}$ for $(B,\Sigma)$ such that \eqref{EX2} holds, $B_t(x,\cdot)$ and $\Sigma_t(x,\cdot)$ are continuous for any $(t,x)\in [0,T]\times\R^d$. Moreover, there exist constant $K>1$,    $(p,q)\in \scr K$ and nonnegative $F\in L_p^q(T)$ such that
\beq\label{EX3} |b_t(x,\mu)|^2\le F(t,x)+K,\ \ K^{-1} I\le \si_t(x,\mu)\si_t(x,\mu)^* \le K I\end{equation} for all $(t,x,\mu)\in [0,T]\times\R^d\times \scr P_\theta.$
\end{enumerate}
\beg{cor}\label{C1.2}   Assume {\bf (A)}. Then the following assertions hold.
  \begin{enumerate}
\item[(1)] Assertion $(1)$ in Theorem $\ref{T1.1}$ holds.
\item[(2)] If moreover, $\sigma$ is uniformly continuous in $x\in\mathbb{R}^d$ uniformly with respect to $(t,\mu)\in[0,T]\times\scr P_{\theta},$ and
 for any $\mu_\cdot\in C([0,T]; \scr P_\theta)$, $b^\mu_t(x):= b_t(x, \mu_t)$ and $\si^\mu_t(x):= \si_t(x,\mu_t)$ satisfy
$| b^\mu|^2+\|\nn \si^\mu\|^2 \in L_p^q(T)$ for some $(p,q)\in \scr K$, where $\nn$ is the weak gradient in the space variable $x\in \R^d$, then assertion $(2)$ in Theorem $\ref{T1.1}$ hold.
\item[(3)] \ Besides the conditions in (2), if there exists a constant $c>0$ such that
$$|B_t(x,y)-B_t(x,y')| +\|\Sigma_t(x,y)-\Sigma_t(x,y')\|\le c |y-y'|,\ \ (t,x)\in [0,T]\times\R^d, y,y'\in  \R^k,$$
then for any     $\F_0$-measurable  random variable $X_0$ on $\R^d$ with $\mu_0:=\L_{X_0}\in \scr P_\theta$ for some $\theta\ge 1$, the SDE $\eqref{E1}$ has a unique strong solution with $\L_{X_\cdot}$ continuous in $\scr P_\theta.$
\end{enumerate}
\end{cor}
In the  next corollary   on the existence of weak solution we  do not  assume  \eqref{EX1}. This result will be used in Section 5.
\beg{cor}\label{C1.3}Assume that \eqref{LIP}, \eqref{EX3} hold. Then the SDE $\eqref{E1}$ has a weak solution with initial distribution $\mu_0$ satisfying $ \L_{X_\cdot} \in C([0,T];\scr P_\theta)$ and the $K$-estimate.
\end{cor}

We now explain that results in  Corollary \ref{C1.2} and Corollary \ref{C1.3} are new comparing with existing results on McKean-Vlasov SDEs. We first consider the model in \cite{CR} where $\psi_b$ and $\psi_\si$ are $\R$-valued functions such that
$$\|B\|_\infty +\sup_{(t,x,r)\in[0,T]\times\R^d\times\R}| \pp_r B_t(x, r)|<\infty,$$ $\psi_b$ is  H\"older continuous, $\psi_\si$ is Lipschitz continuous,  and for some constants $C>1$, $\theta\in (0,1]$,
\beg{align*} & C^{-1} I\le \Sigma \Sigma^*\le CI, \\
& \|\Sigma_t(x,r)- \Sigma_t(x',r')\| \le C(|x-x'| +|r-r'|), \\
&\|\pp_r \Sigma_t(x,r)- \pp_r \Sigma_t(x',r)\| \le C|x-x'|^\theta.\end{align*}
Then \cite[Theorem 1]{CR} says that when $\L_{X_0}\in \scr P_2$ the SDE \eqref{E1} has a unique strong solution. Obviously,   the above conditions imply
$\|b\|_\infty+\|\nn\si\|_\infty<\infty$, but this is not necessary for conditions in  Corollary \ref{C1.2} and Corollary \ref{C1.3}.

Next, \cite{MV} considers \eqref{E1} with
$$b_t(x,\mu):= \int_{\R^d} \tt b_t(x,y)\mu(\d y),\ \ \si_t(x,\mu):= \int_{\R^d} \tt\si_t(x,y) \mu(\d y)$$ for measurable functions  $$\tt b: [0,T]\times \R^d\times \R^d \to \R^d,\ \
\tt \si: [0,T]\times \R^d\times \R^d \to \R^d\otimes\R^d$$ satisfying
$$\|\tt \si_t(x,y)\|+|\tt b_t(x,y)|\le C(1+|x|),\ \ \tt\si\tt\si^*\ge C^{-1}I$$ for some constant $C>1.$ Then \cite[Theorem 1]{MV} says that when $\L_{X_0}\in \scr P_4$, \eqref{E1} has a weak solution. If moreover $\si$ does not depend on the distribution and $\|\nn \si\|_\infty<\infty$, then \cite[Theorem 2]{MV} shows that when $\E \e^{r|X_0|^2}<\infty$ for some $r>0$, the SDE \eqref{E1} has a unique strong solution. Obviously, to apply these results it is necessary that $b$ and $\nn \si$ are (locally) bounded, which is however not necessary for the condition in Corollary \ref{C1.2} and Corollary \ref{C1.3}.

\subsection{Harnack inequality}

In this subsection, we  investigate the dimension-free log-Harnack inequality introduced in \cite{RW10} for \eqref{E1}, see \cite{Wbook} and references within for general results on these type Harnack inequalities and applications.  We establish Harnack inequalities for $P_tf$ using  coupling by change of measures (see for instance \cite[\S 1.1]{Wbook}). To this end, we need to assume that the noise part is distribution-free; that is, we consider the following special version of \eqref{E1}:
\beq\label{E11} \d X_t= b_t(X_t,\L_{X_t})\d t +\si_t(X_t)\d W_t,\ \ t\in [0,T]\end{equation} As in \cite{FYW1}, we define $P_tf(\mu_0)$ and $P_t^*\mu_0$ as follows:
$$(P_tf)(\mu_0)= \int_{\R^d} f  \d(P_t^*\mu_0)= \E f(X_t(\mu_0)),\ \ f\in \B_b(\R^d), t\in [0,T],  \mu_0\in \scr P_2,$$ where $X_t(\mu_0)$ solves \eqref{E11} with
  $\L_{X_0}=\mu_0.$
Let
$$\D=\bigg\{\phi:  [0,\infty)\to [0,\infty)  \text{\ is\ increasing},  \phi^2 \text{\ is\ concave,} \int_0^1\ff{\phi(s)}s\d s<\infty\bigg\}.$$
We will need the following assumption.
\beg{enumerate}
\item[$\bf{(H)}$] $\|b\|_{\infty}<\infty$ and there exist a constant $K>1$ and $\phi\in \D$ such that for any
$t\in[0,T],\ x,y\in \mathbb{R}^d,$ and $\mu,\nu\in \scr P_2$,
\beq\label{H1}
K^{-1}I\leq(\sigma_t \sigma_t^{\ast})(x) \leq KI,\
\|\sigma_t(x)-\sigma_t(y)\|^2_{\mathrm{HS}}\leq K|x-y|^2,
\end{equation}
\beq\label{b-phi}
|b_t(x,\mu)-b_t(y,\nu)|\leq \phi(|x-y|)+K\mathbb{W}_2(\mu,\nu).
\end{equation}
\end{enumerate}
\beg{thm}\label{T3.1} Assume {\bf (H)}. There exists a  constant $C>0$ such that
  \beq\label{LH2}(P_{t}\log f)(\nu_0)\le  \log (P_{t}f)(\mu_0)+ \ff{C}{t\land 1}\W_2(\mu_0,\nu_0)^2\end{equation}
  for any $t\in (0,T],\mu_0,\nu_0\in\scr P_2,  f\in \B_b^+(\R^d)$ with $f\geq 1.$ Moreover, there exists a constant $p_0>1$ such that for any $p>p_0$,
   \beq\label{H2'}(P_{t}f)^p(\nu_0)\le  (P_{t}f^p)(\mu_0)\exp\left\{\ff{c}{t\land 1}\W_2(\mu_0,\nu_0)^2\right\}\end{equation}
   for any $t\in (0,T],\mu_0,\nu_0\in\scr P_2,  f\in \B_b^+(\R^d)$ and  some constant $c=c(p,K)>0$.
\end{thm}

\subsection{Shift Harnack inequality}

In this section we establish the shift Harnack inequality for $P_t$  introduced in \cite{W14a}. To this end, we assume that $\sigma_t(x,\mu)$ does not depend on $x$. So SDE \eqref{E1} becomes
\beq\label{E5} \d X_t= b_t(X_t, \L_{X_t})\d t +\si_t(\L_{X_t}) \d W_t,\ \ t\in [0,T].\end{equation}

\beg{thm}\label{T5.1} Let $\si: [0,T]\times \scr P_2\to \R^d\otimes \R^d$ and $b: [0,\infty)\times \R^d\times\scr P_2\to \R^d$ be  measurable such that
$\si$ is invertible with $\|\si_t\|_{\infty}+\|\si_t^{-1}\|_{\infty}$  is bounded in $t\in [0,T]$, and $b$ satisfies the corresponding conditions in {\bf (H)}.
\beg{enumerate} \item[$(1)$] For any $p>1, t\in [0,T], \mu_0\in \scr P_2, v\in\R^d$ and $f\in \B_b^+(\R^d)$,
\beg{align*}(P_{t}f)^p(\mu_0)\le &(P_{t}f^p(v+\cdot))(\mu_0)\\
&\times \exp\bigg[\ff{p\, \int_0^t \|\si_s^{-1}\|_{\infty}^2 \big\{|v|/t+\phi(s|v|/t)\big\}^2\d s}{2(p-1)}\bigg].\end{align*}
Moreover, for any $f\in \B_b^+(\R^d)$ with $f\geq 1$,
$$(P_{t}\log f)(\mu_0)\le \log (P_{t} f(v+\cdot))(\mu_0)+\frac{1}{2}\int_0^t \|\si_s^{-1}\|_{\infty}^2 \big\{|v|/t+\phi(s|v|/t)\big\}^2\d s.$$
\end{enumerate} \end{thm}

\section{Preparations}

We first present a new result on Krylov's estimate, then recall two lemmas from \cite{GM} for the construction of weak solution, and finally introduce two lemmas on the existence and uniqueness of strong solutions.

\subsection{Krylov's estimate}

Consider the following SDE on $\R^d$:
\beq\label{EN} \d X_t= b_t(X_t)\d t + \si_t(X_t)\d W_t,\ \ t\in [0,T].\end{equation}
\beg{lem}\label{KK} Let $T>0$, and let $p,q\in (1,\infty)$ with $\ff d p +\ff 2 q<1$.  Assume that $\sigma_t(x)$ is uniformly continuous in $x\in\mathbb{R}^d$ uniformly with respect to $t\in[0,T]$, and that for a constant $K>1$ and some nonnegative function $F\in L_p^q(T)$
such that \beq\label{APP1} K^{-1}I\le \si_t(x)\si_t(x)^*\le K I,\ \ (t,x)\in [0,T]\times \R^d,\end{equation}
 \beq\label{APP2} |b_t(x)| \le K+ F(t,x),\ \ (t,x)\in [0,T]\times \R^d.\end{equation}
 Then for any $(\aa,\bb)\in \scr K$, there exist constants  $C=C(\dd,K, \aa,\bb, \|F\|_{L_p^q(T)})>0$  and $\dd=\dd(\aa,\bb)>0$, such that for any $s\in [0,T)$ and any solution $(X_{s,t})_{t\in [s,T]}$ of $\eqref{EN}$ from time $s$,
\beq\label{APP'}\E\bigg[\int_s^t |f|(r, X_{s,r}) \d r\Big| \F_s\bigg]\le   C  (t-s)^{\dd}\|f\|_{L_{\aa}^{\bb}(T)},\ t\in [s,T], f\in L_{\aa}^{\bb}(T).\end{equation}
 \end{lem}
 \beg{proof}
  When $b$ is bounded, the assertion is due to \cite[Theorem 2.1]{Z2}. If $|b|\leq K+F$ for some constant $K>0$ and $0\leq F\in L_p^q(T)$,  then we have a decomposition $b=b^{(1)}+b^{(2)}$ with $\|b^{(1)}\|_{\infty}\leq K$ and $|b^{(2)}|\leq F$, for instance, $b^{(1)}=\frac{b}{1\vee(|b|/K)}$. Letting the diffeomorphisms $\{\theta_t\}_{t\in[0,T]}$ on $\mathbb{R}^d$ be constructed in \cite[Lemma 4.3]{Z2} for $b^{(2)}$ replacing $b$, then $Y_{s,t}=\theta_t(X_{s,t})$ solves
  \beq\label{EN'} \d Y_t= \bar{b}_t(Y_t)\d t + \bar{\si}_t(Y_t)\d W_t,\ \ t\in [s,T],\end{equation}
  where $\bar{b}$ is bounded, and $\bar{\si}$ is uniformly continuous in $x\in\mathbb{R}^d$ uniformly with respect to $t\in[0,T]$. Moreover, there exists a constant $\bar{K}>1$ depending on $K$ and $\|F\|_{L_p^q(T)}$ such that \beq\label{APP1'} \bar{K}^{-1}I\le \bar{\si}_t(x)\bar{\si}_t(x)^*\le \bar{K} I,\ \ (t,x)\in [0,T]\times \R^d,\end{equation} and $$\|\bar{b}\|_{\infty}+\|\nabla \theta\|_{\infty}+\|\nabla \theta^{-1}\|_{\infty}\leq \bar{ K}.$$ Again by \cite[Theorem 2.1]{Z2}, there exists a constant $C=C(\dd,\bar{K}, \aa,\bb)>0$ and $\dd=\dd(\aa,\bb)>0$ such that
\beq\label{APP3'}\E\bigg[\int_s^t |f|(r, Y_{s,r}) \d r\Big| \F_s\bigg]\le   C  (t-s)^{\dd}\|f\|_{L_{\aa}^{\bb}(T)},\ t\in [s,T], f\in L_{\aa}^{\bb}(T).\end{equation}
This together with $\|\nabla \theta\|_{\infty}<\bar{K}$  implies that
\begin{align*}&\E\bigg[\int_s^t |f|(r, X_{s,r}) \d r\Big| \F_s\bigg]
 =\E\bigg[\int_s^t |f|(r, \theta_r^{-1}(Y_{s,r})) \d r\Big| \F_s\bigg]\\
&\leq C  (t-s)^{\dd}\left(\int_0^T\left(\int_{\mathbb{R}^d}|f(r,\theta^{-1}_r(x))|^\alpha\d x\right)^{\frac{\beta}{\alpha}}\d r\right)^{\frac{1}{\beta}}\\
&=C  (t-s)^{\dd}\left(\int_0^T\left(\int_{\mathbb{R}^d}|f(r,y)|^\alpha|\mathrm{det} \nabla\theta_r|\d y\right)^{\frac{\beta}{\alpha}}\d r\right)^{\frac{1}{\beta}}\\
&\le   C  (t-s)^{\dd}\|f\|_{L_{\aa}^{\bb}(T)},\ t\in [s,T], f\in L_{\aa}^{\bb}(T).\end{align*}
Then the proof is finished.
 \end{proof}

\subsection{Convergence  of stochastic processes}
To prove Theorem \ref{T1.1}(1), we will use the following two lemmas due to \cite[Lemma 5.1, 5.2]{GM}.
\begin{lem}\label{PC} Let $\{\psi^n\}_{n\geq 1}$ be a sequence of $d$-dimensional processes defined on
some probability space. Assume that
\begin{align}\label{Ub}\lim_{R\to\infty}\sup_{n\geq 1}\sup_{t\in[0,T]}\P(|\psi^n_t|>R)=0,
\end{align}
and for any $\vv>0$,
\begin{align}\label{ETC}
\lim_{\theta\to0}\sup_{n\geq 1}\sup_{s,t\in[0,T]}\{\P(|\psi^n_t-\psi^n_s|>\varepsilon): |t-s|\leq \theta\}=0.
\end{align}
  Then there exist a sequence $\{n_k\}_{k\geq 1}$, a probability space $(\tilde{\Omega},\tilde{\F}, \tilde{\P})$ and
stochastic processes $\{X_t, X^k_t\}_{t\in [0,T]} (k \geq  1)$, such that for every $t\in [0,T]$, $\L_{\psi^{n_k} _t}|\P=\L_{X^k_t}|\tilde{\P}$, and $X^k_t$ converges to $X_t$ in probability
$\tt\P$ as $k\to\infty$.
\end{lem}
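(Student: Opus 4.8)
\textbf{Proof proposal for Lemma \ref{PC}.}

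The plan is to recognize this as a standard tightness-plus-Skorokhod-representation result, and to assemble it from classical theorems rather than to prove everything from scratch. First I would observe that the two hypotheses \eqref{Ub} and \eqref{ETC} are precisely Aldous-type conditions guaranteeing tightness of the laws $\{\L_{\psi^n}\}_{n\ge 1}$ on a suitable path space. Since the processes need not be continuous, the natural ambient space is $D([0,T];\R^d)$ with the Skorokhod $J_1$ topology; condition \eqref{Ub} controls the one-dimensional marginals uniformly (compact containment after combining with the modulus estimate), and condition \eqref{ETC} is a uniform stochastic equicontinuity condition that, together with \eqref{Ub}, yields tightness via the Aldous criterion (see e.g. Billingsley or Jacod--Shiryaev). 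I would cite the precise form from \cite{GM} since the statement is lifted verbatim from there.

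Second, once tightness is established, I would pass to a weakly convergent subsequence $\L_{\psi^{n_k}} \to \L_X$ for some process $X$ (identified with a probability measure on the path space), and then invoke the Skorokhod representation theorem: there is a single probability space $(\tt\OO,\tt\F,\tt\P)$ carrying processes $X^k$ and $X$ with $\L_{X^k}|\tt\P = \L_{\psi^{n_k}}|\P$ (so in particular all finite-dimensional, hence all one-dimensional, marginals agree: $\L_{\psi^{n_k}_t}|\P = \L_{X^k_t}|\tt\P$ for every $t$) and $X^k \to X$ $\tt\P$-almost surely in the path topology. Finally I would upgrade almost-sure path convergence to the claimed fixed-time convergence in probability: almost-sure convergence in $D([0,T];\R^d)$ implies convergence at every continuity point of the limit, and since \eqref{ETC} forces the limit law to have no fixed jump times (for each fixed $t$, $\tt\P(X_t \ne X_{t-}) = 0$), we get $X^k_t \to X_t$ in $\tt\P$-probability for every $t\in[0,T]$.

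The main obstacle is the technical care needed in the tightness step: verifying that \eqref{Ub} and \eqref{ETC} as stated — which only involve pairs of times and single times, not a full oscillation functional — actually imply relative compactness in the Skorokhod topology, and in particular handling the boundary point $t=T$ and the possibility of jumps. One must check that the uniform modulus-of-continuity-in-probability bound \eqref{ETC} is strong enough: strictly speaking it gives tightness in a space of processes that are ``continuous in probability'' uniformly, and the cleanest route is to note that \eqref{ETC} implies the limit is continuous in probability, so no Aldous jump-correction term is needed and the argument reduces to the Kolmogorov--Chentsov-style / Prokhorov tightness on $D([0,T];\R^d)$ with the limit concentrated on processes without fixed discontinuities. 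Since this is exactly \cite[Lemma 5.1]{GM}, I would present the argument at the level of citing that reference for the tightness and Skorokhod steps, and spell out only the final passage from a.s.\ path convergence to marginal convergence in probability, which is the piece actually used in the sequel.
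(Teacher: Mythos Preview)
Your proposal is correct and in fact more detailed than what the paper does: the paper simply states this lemma as being due to \cite[Lemma~5.1]{GM} and gives no proof whatsoever. Since you also plan to cite \cite{GM} for the tightness and Skorokhod steps, your approach coincides with the paper's (which is pure citation), with the added bonus that you sketch the underlying mechanism.
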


\begin{lem}\label{SL} Let $\{\eta^n\}_{n\geq 1}$ and $\eta$ be uniformly bounded $\mathbb{R}^{d}\otimes\mathbb{R}^{k}$-valued stochastic
processes, and let $W^n_t$ and $W_t$  for $t\in [0,T]$ be Wiener processes such that the stochastic It\^{o} integrals
$$I^n_t:=\int^t
_0 \eta^n_s \d W^n_s,\ \ I_t :=
\int ^t
_0 \eta_s  \d W_s,\ \ t\in [0,T]$$ are well-defined. Assume
that $\eta^n_t \to \eta_t$ and $W^n_t \to W_t$ in probability for every $t\in [0,T]$. Then
$$\lim_{n\to\infty}\P\left(\sup_{t\in[0,T]}|I^n_t-I_t|\geq\varepsilon\right)=0,\ \ \vv>0.$$
 \end{lem}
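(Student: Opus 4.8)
The plan is to follow the standard localization-and-truncation argument for passing to the limit under stochastic integrals, exploiting the uniform boundedness hypothesis to control the quadratic variation. First I would fix $\vv,\eta>0$ and, using the triangle inequality, split
$$I^n_t-I_t = \int_0^t (\eta^n_s-\eta_s)\,\d W^n_s + \Big(\int_0^t \eta_s\,\d W^n_s - \int_0^t \eta_s\,\d W_s\Big)=:J^n_t+R^n_t,$$
so it suffices to show $\sup_{t\in[0,T]}|J^n_t|\to 0$ and $\sup_{t\in[0,T]}|R^n_t|\to 0$ in probability. For the term $R^n_t$, the idea is to approximate the fixed integrand $\eta$ (uniformly bounded, say $\|\eta\|_\infty\le M$, jointly measurable and adapted) in $L^2(\d t\otimes\d\P)$ by a sequence of simple processes $\eta^{(m)}$; for a simple process, $\int_0^t \eta^{(m)}_s\,\d W^n_s$ is a finite sum of terms of the form $\eta^{(m)}_{s_i}(W^n_{s_{i+1}\wedge t}-W^n_{s_i\wedge t})$, which converges in probability to the corresponding sum with $W$ by the hypothesis $W^n_t\to W_t$ in probability and the continuous mapping theorem, uniformly in $t$ since only finitely many time points are involved. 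Then Doob's maximal inequality applied to the continuous martingales $\int_0^\cdot(\eta-\eta^{(m)})\,\d W^n$ and $\int_0^\cdot(\eta-\eta^{(m)})\,\d W$ controls the approximation error uniformly in $n$; here one uses that $W^n$ is a Wiener process so that $\E\big(\sup_t|\int_0^t(\eta-\eta^{(m)})\,\d W^n|^2\big)\le 4\,\E\int_0^T|\eta_s-\eta^{(m)}_s|^2\,\d s$, independent of $n$.

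For $J^n_t=\int_0^t(\eta^n_s-\eta_s)\,\d W^n_s$ I would use the standard estimate that for a continuous local martingale built from a bounded integrand $\xi^n:=\eta^n-\eta$ (with $\|\xi^n\|_\infty\le 2M$) and any $\alpha,\beta>0$,
$$\P\Big(\sup_{t\le T}\Big|\int_0^t \xi^n_s\,\d W^n_s\Big|>\alpha\Big)\le \frac{\beta}{\alpha^2}+\P\Big(\int_0^T|\xi^n_s|^2\,\d s>\beta\Big).$$
Since $\xi^n_s=\eta^n_s-\eta_s\to 0$ in probability for each $s$ and $|\xi^n_s|^2\le 4M^2$ is dominated, the bounded convergence theorem (in the variable $s$, after taking expectations, via Fubini) gives $\E\int_0^T|\xi^n_s|^2\,\d s\to 0$, hence $\int_0^T|\xi^n_s|^2\,\d s\to 0$ in probability, so the second term vanishes as $n\to\infty$; then letting $\beta\to 0$ kills the first term. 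Combining the two pieces with an $\vv/2$ split finishes the argument.

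The main obstacle I anticipate is a bookkeeping subtlety rather than a deep one: the processes $W^n$ live a priori on possibly different filtrations, and the inequality above and Doob's inequality require $\int_0^\cdot\xi^n\,\d W^n$ to be a genuine (local) martingale with the stated quadratic variation bound — so I must be careful that "Wiener process such that the Itô integral is well-defined" is read as $W^n$ being an $\F^n_t$-Brownian motion with $\eta^n,\eta$ adapted, and that the bound $4\,\E\int_0^T|\cdot|^2$ is the Burkholder–Davis–Gundy / Doob constant valid uniformly over $n$ because the quadratic variation of $\int_0^\cdot\xi\,\d W^n$ is $\int_0^\cdot|\xi_s|^2\,\d s$ regardless of $n$. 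A second minor point is justifying that convergence in probability at each fixed $t$ upgrades, for the simple-process pieces and after the maximal-inequality control of the tails, to uniform-in-$t$ convergence in probability; this is exactly where the simple-function reduction pays off, since a finite sum of convergent-in-probability terms converges in probability and the supremum over $t$ of such a piecewise-affine-in-$W^n$ expression is attained among finitely many quantities.
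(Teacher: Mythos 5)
Your decomposition $I^n_t-I_t=J^n_t+R^n_t$ has a genuine gap: both $J^n_t=\int_0^t(\eta^n_s-\eta_s)\,\d W^n_s$ and $R^n_t$ involve the stochastic integral $\int_0^t\eta_s\,\d W^n_s$, and the hypotheses of the lemma do not make this object well-defined. The lemma only asserts that $\int\eta^n\,\d W^n$ and $\int\eta\,\d W$ exist; nothing forces $\eta$ to be adapted to (or progressively measurable with respect to) a filtration under which $W^n$ is a Brownian motion. You flag this yourself, but then propose to read the hypothesis as ``$W^n$ is an $\F^n_t$-Brownian motion with $\eta^n,\eta$ adapted,'' which is a genuinely stronger assumption that is neither stated nor satisfied where the lemma is applied in the paper: in the proof of \eqref{(A2)} one takes $\eta_t=\si^{m}_t(\tilde X_t,\tilde\mu^m_t)$ and $W^n=\tilde W^n$, where $\tilde X$ is the limit process, so $\eta$ is $\tilde\F_t$-adapted but has no reason to be $\tilde\F^n_t$-adapted. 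Consequently $\int\eta\,\d W^n$ is not a legitimate It\^o integral, $J^n$ and $R^n$ are not defined, and neither the Lenglart estimate for $J^n$ nor Doob's inequality for $\int(\eta-\eta^{(m)})\,\d W^n$ applies.

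The standard argument (as in \cite[Lemma~5.2]{GM}, or Skorokhod's book) avoids the cross term by discretizing $\eta^n$ and $\eta$ simultaneously at a common grid $0=s_0<\dots<s_m=T$ with $\kappa(s)=s_i$ on $[s_i,s_{i+1})$:
\[
I^n_t-I_t=\int_0^t(\eta^n_s-\eta^n_{\kappa(s)})\,\d W^n_s+\Big(\sum_i\eta^n_{s_i}\,\DD_i W^n(t)-\sum_i\eta_{s_i}\,\DD_i W(t)\Big)-\int_0^t(\eta_s-\eta_{\kappa(s)})\,\d W_s,
\]
with $\DD_i W(t):=W_{s_{i+1}\wedge t}-W_{s_i\wedge t}$. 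Here each stochastic integral pairs an integrand with the correct Brownian motion, so both are genuine continuous local martingales; their sup is controlled uniformly in $n$ via the Lenglart estimate you quote together with
\[
\E\int_0^T|\eta^n_s-\eta^n_{\kappa(s)}|^2\,\d s\ \lra\ \E\int_0^T|\eta_s-\eta_{\kappa(s)}|^2\,\d s,
\]
which follows from bounded convergence using $\eta^n_s\to\eta_s$ in probability and uniform boundedness. The middle term is a finite sum whose summands converge in probability by hypothesis; to get the supremum over $t$ one also uses the modulus of continuity of $W^n$ on each subinterval (which is law-independent of $n$), not merely grid evaluations — your remark that the sup is ``attained among finitely many quantities'' is not quite right because between grid points the expression is affine in $W^n_t$, not constant. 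This discretize-both-sides scheme is what your two-term split is missing.
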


\subsection{Existence and uniqueness on strong solutions}
We first present a result on the existence of strong solutions deduced from weak solutions, then  introduce a result on the existence and uniqueness of strong solutions under a Lipschitz type condition.

\begin{lem}\label{SS} Let $(\bar\Omega, \bar\F_t,\bar W_t,\bar\P)$ and $\bar{X}_t$ be a weak solution to \eqref{E1} with $\mu_t:=\L_{\bar X_t}|\bar\P=\mu_t$. If the SDE
\begin{align}\label{class}
\d X_t= b_t(X_t,\mu_t)\,\d t+ \sigma_t(X_t,\mu_t)\,\d W_t,\ \ 0\le t\le T
\end{align} has a unique strong solution $X_t$ up to life time with $\L_{X_0}=\mu_0$, then   \eqref{E1} has a strong solution.
\end{lem}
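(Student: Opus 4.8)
The plan is to combine the given weak solution with the assumed pathwise uniqueness of the auxiliary SDE \eqref{class} via the classical Yamada–Watanabe argument. The key observation is that, once the marginal flow $\mu_t = \L_{\bar X_t}|\bar\P$ is frozen, \eqref{E1} becomes the \emph{classical} (distribution-free) SDE \eqref{class} with coefficients $b^\mu_t(x) := b_t(x,\mu_t)$ and $\si^\mu_t(x) := \si_t(x,\mu_t)$. Thus $(\bar\Omega,\bar\F_t,\bar W_t,\bar\P,\bar X_t)$ is in particular a weak solution of \eqref{class} with initial law $\mu_0$. By hypothesis, \eqref{class} enjoys strong existence and pathwise uniqueness (up to the life time); by the Yamada–Watanabe theorem these two properties together yield a measurable map $\Phi : \R^d\times C([0,T];\R^d)\to C([0,T];\R^d)$, adapted in the appropriate filtration, such that for \emph{any} weak solution of \eqref{class} one has $X_\cdot = \Phi(X_0, W_\cdot)$ almost surely.

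The steps, in order, are as follows. First I would record that the frozen SDE \eqref{class} has a classical strong solution and pathwise uniqueness, so the Yamada–Watanabe principle applies and produces the functional $\Phi$ above. Second, I would apply $\Phi$ on the \emph{original} probability space $(\Omega,\F_t,\P)$ to the given Brownian motion $W_t$ and the $\F_0$-measurable initial value $X_0$ with $\L_{X_0} = \mu_0$, setting $X_t := \Phi(X_0, W_\cdot)_t$; adaptedness of $\Phi$ guarantees $X_t$ is $\F_t$-adapted, hence a strong solution candidate for \eqref{class}. Third — and this is the point that requires care — I must check that this $X_t$ actually solves the \emph{distribution-dependent} equation \eqref{E1}, i.e. that $\L_{X_t}|\P = \mu_t$ for all $t$, so that $b_t(X_t,\mu_t) = b_t(X_t,\L_{X_t})$ and likewise for $\si$. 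This follows because $X_\cdot$ and $\bar X_\cdot$ are both weak solutions of the \emph{same} classical SDE \eqref{class} with the same initial distribution $\mu_0$; weak uniqueness (which is implied by pathwise uniqueness plus strong existence) forces $\L_{X_\cdot}|\P = \L_{\bar X_\cdot}|\bar\P$ as measures on path space, and in particular the one-dimensional marginals agree: $\L_{X_t}|\P = \L_{\bar X_t}|\bar\P = \mu_t$. Substituting this identity back shows $X_t$ solves \eqref{E1} driven by $W_t$ on $(\Omega,\F_t,\P)$, which is exactly a strong solution of \eqref{E1}.

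The main obstacle is the third step: one must be careful that ``strong solution of \eqref{E1}'' means a solution on the \emph{prescribed} stochastic basis with the \emph{prescribed} Brownian motion, and that the self-consistency $\L_{X_t} = \mu_t$ genuinely closes the equation. This is handled entirely by the weak-uniqueness consequence of the hypothesis on \eqref{class}; no new estimate is needed. A minor technical point is the life-time caveat: if the strong solution of \eqref{class} is only defined up to an explosion time, one argues on the stochastic intervals $[0,\tau_n]$ with $\tau_n$ the exit times from balls, obtains the identification of laws there, and notes that since $\bar X$ is a (non-exploding) weak solution up to $T$, the limit law is supported on non-exploding paths, so $X$ does not explode before $T$ either. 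One should also mention that the uniform continuity of $\si$ in $x$ and the integrability $|b^\mu|^2 + \|\nn\si^\mu\|^2 \in L^q_p(T)$ are not used in this lemma itself — they enter only when one verifies, in the proof of Theorem \ref{T1.1}(2), that the frozen SDE \eqref{class} does have a unique strong solution, which is the input assumed here.
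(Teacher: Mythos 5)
Your proposal is correct and follows essentially the same route as the paper: freeze $\mu_t$ to reduce \eqref{E1} to the classical SDE \eqref{class}, note $\bar X$ is a weak solution of \eqref{class}, invoke Yamada--Watanabe so that strong uniqueness gives weak uniqueness, deduce $\L_{X_t}=\mu_t$ (and non-explosion) for the strong solution $X_t$, and substitute back to conclude $X_t$ solves \eqref{E1}. Your detour through the measurable functional $\Phi$ is slightly more elaborate than necessary here since the hypothesis already hands you a strong solution $X_t$ on the given stochastic basis, but the core identification-of-laws step is the same as the paper's.
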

\begin{proof} Since $\mu_t= \scr L_{\bar X_t}|\bar \P$,   $\bar{X}_t$ is a weak solution to \eqref{class}. By Yamada-Watanabe principle, the strong uniqueness of \eqref{class} implies the weak uniqueness, so that $X_t$ is nonexplosive with    $\L_{X_t}=\mu_t, t\ge 0$. Therefore, $X_t$ is a strong solution to \eqref{E1}.
\end{proof}

\begin{lem}\label{SS2} Let $\theta\ge 1$ and $\delta_0$ be the Dirac measure at point $0$. If $b_t(0,\delta_0)$ is bounded in $t\in[0,T]$, and there exists a constant $L>0$ such that
\beq\label{LIPS} \beg{split} &\|\si_t(x,\mu)-\si_t(y,\nu)\|+|b_t(x,\mu)-b_t(y,\nu)|\\
&\le L\big\{|x-y|+\W_\theta(\mu,\nu)\big\},\ \ x,y\in\R^d, \mu,\nu\in \scr P_\theta, t\in [0,T],\end{split}\end{equation}
then for any $X_0$ with $\E |X_0|^\theta<\infty$,   \eqref{E1} has a unique strong solution $(X_t)_{t\in [0,T]}$.
\end{lem}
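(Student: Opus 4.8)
The plan is to prove Lemma \ref{SS2} by a standard Picard/Banach fixed-point argument carried out on the space of probability-measure-valued curves, exploiting the fact that under the global Lipschitz condition \eqref{LIPS} the decoupled (classical) SDE is well-posed and depends continuously — in a quantitative $\W_\theta$-sense — on the frozen marginal curve. First I would observe that \eqref{LIPS} together with boundedness of $t\mapsto b_t(0,\delta_0)$ yields a linear growth bound $\|\si_t(x,\mu)\|+|b_t(x,\mu)|\le C(1+|x|+\W_\theta(\mu,\delta_0))$, which by Gronwall gives, for any fixed continuous curve $\mu_\cdot\in C([0,T];\scr P_\theta)$, a unique strong solution $X^\mu_\cdot$ of the decoupled SDE \eqref{class} with $\E\sup_{t\le T}|X^\mu_t|^\theta<\infty$ (here one uses Burkholder–Davis–Gundy for $\theta>1$; for $\theta\ge 2$ this is classical, and for general $\theta\ge 1$ one can first get the $\theta=2$ bound and then interpolate, or simply run the argument with $\theta\vee 2$). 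This defines a map $\Phi:C([0,T];\scr P_\theta)\to C([0,T];\scr P_\theta)$ by $\Phi(\mu)_t:=\L_{X^\mu_t}$; continuity of $t\mapsto\L_{X^\mu_t}$ in $\W_\theta$ follows from the moment bound plus $\W_\theta(\L_{X^\mu_t},\L_{X^\mu_s})^\theta\le\E|X^\mu_t-X^\mu_s|^\theta$.

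Next I would show $\Phi$ is a contraction for a suitably weighted sup-norm on $C([0,T];\scr P_\theta)$. Given two curves $\mu_\cdot,\nu_\cdot$, couple the driving noise: let $X^\mu_\cdot,X^\nu_\cdot$ solve \eqref{class} with the same $W$ and the same initial value $X_0$. Then
\beq
\d(X^\mu_t-X^\nu_t)=\big\{b_t(X^\mu_t,\mu_t)-b_t(X^\nu_t,\nu_t)\big\}\d t+\big\{\si_t(X^\mu_t,\mu_t)-\si_t(X^\nu_t,\nu_t)\big\}\d W_t,
\end{equation}
and applying Itô's formula to $|X^\mu_t-X^\nu_t|^{\theta\vee 2}$, using \eqref{LIPS} and BDG, one obtains
\beq
\E\sup_{r\le t}|X^\mu_r-X^\nu_r|^{\theta\vee2}\le C\int_0^t\Big(\E\sup_{r\le s}|X^\mu_r-X^\nu_r|^{\theta\vee2}+\W_\theta(\mu_s,\nu_s)^{\theta\vee2}\Big)\d s.
\end{equation}
Grönwall then gives $\E\sup_{r\le t}|X^\mu_r-X^\nu_r|^{\theta\vee2}\le C'\int_0^t\W_\theta(\mu_s,\nu_s)^{\theta\vee2}\d s$, hence $\W_\theta(\Phi(\mu)_t,\Phi(\nu)_t)^{\theta\vee2}\le C'\int_0^t\W_\theta(\mu_s,\nu_s)^{\theta\vee2}\d s$. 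Iterating this inequality $n$ times produces a factor $(C'T)^n/n!\to0$, so some iterate $\Phi^n$ is a strict contraction on $C([0,T];\scr P_\theta)$ (equipped with the metric $\sup_{t\le T}\W_\theta(\mu_t,\nu_t)$, or equivalently one may use the exponentially weighted metric $\sup_t e^{-\ll t}\W_\theta(\mu_t,\nu_t)$ with $\ll$ large). By the Banach fixed-point theorem $\Phi$ has a unique fixed point $\mu^*_\cdot$, and then $X^{\mu^*}_\cdot$ is a strong solution of \eqref{E1}: indeed $\L_{X^{\mu^*}_t}=\Phi(\mu^*)_t=\mu^*_t$, so the decoupled equation it solves coincides with \eqref{E1}.

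Finally, for uniqueness of the strong solution of \eqref{E1} itself: if $X_\cdot$ is any strong solution with $\L_{X_0}=\L_{X_0}$ and (by the linear growth bound and Grönwall, automatically) finite $\theta$-moments, set $\mu_t:=\L_{X_t}$; then $\mu_\cdot\in C([0,T];\scr P_\theta)$ and $X_\cdot$ solves the decoupled SDE \eqref{class} with this $\mu_\cdot$, whence $\Phi(\mu)=\mu$, so $\mu=\mu^*$ by the fixed-point uniqueness; then $X_\cdot$ and $X^{\mu^*}_\cdot$ solve the same classical Lipschitz SDE driven by the same $W$ with the same initial datum, so they coincide by pathwise uniqueness for \eqref{class}. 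The main obstacle, and the only point requiring a little care, is the moment bookkeeping when $\theta\in[1,2)$: Itô's formula and BDG are cleanest for the exponent $\theta\vee 2\ge 2$, so I would systematically work with that exponent, deriving all a priori bounds and the contraction estimate in $L^{\theta\vee2}$ and only at the very end passing to $\W_\theta$ via $\W_\theta\le\W_{\theta\vee2}\le(\E|\cdot|^{\theta\vee2})^{1/(\theta\vee2)}$; one must also check that the fixed point lives in $\scr P_\theta$ (not just that the estimates close), which is immediate from the a priori moment bound with initial datum having finite $\theta$-moment.
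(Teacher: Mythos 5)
Your proposal is essentially the same as the paper's argument: both are Picard iterations on the curve of marginal laws, both apply It\^o's formula to the \emph{difference} of coupled iterates (which starts at zero, so the $L^2$ analysis is legitimate even when $\E|X_0|^2=\infty$), and both pass from the $L^2$ estimate to $\W_\theta$ via Jensen when $\theta<2$; the paper obtains a direct contraction on a short interval $[0,t_0]$ with $t_0$ uniform in $X_0$, which is equivalent to your iterated/weighted-norm contraction. One caveat: your suggested shortcut of ``first get the $\theta=2$ bound and then interpolate'' for the a priori moment estimate $\sup_t\E|X^\mu_t|^\theta<\infty$ does not work when $\theta\in[1,2)$ and $\E|X_0|^2=\infty$; one must argue directly at exponent $\theta$ (e.g.\ It\^o for $(|x|^2+\vv)^{\theta/2}$, using that the $\theta(\theta-2)$ term is nonpositive), which is precisely what the paper invokes via \cite[Lemma~2.3(1)]{FYW1}.
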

\beg{proof} When $\theta\ge 2$ the assertion follows from \cite[Theorem 2.1]{FYW1}. So we only consider $\theta<2$. As explained in \cite{FYW1} that it suffices to find a constant $t_0\in (0,T)$ independent of $X_0$ such that \eqref{E1} has a unique strong solution up to time $t_0$ and
$\sup_{t\in [0,t_0]} \E |X_t|^\theta<\infty$.

Let $X_t^{(0)}=X_0 $ and $\mu_t^{(0)}=\mu_0$ for $t\in [0,T].$ For any $n\ge 1$, consider the SDE
$$\d X_t^{(n)}= b_t(X_t^{(n)}, \mu_t^{(n-1)})\d t+ \si_t(X_t^{(n)}, \mu_t^{(n-1)})\d W_t,\ \ X_0^{(n)}=X_0,$$
where $\mu_t^{(n-1)}=\L_{X_t^{(n-1)}}, 0\le t\le T.$ By \cite[Lemma 2.3(1)]{FYW1}, for any $n\ge 1$ this SDE has a unique solution and
\beq\label{SS3} \sup_{s\in [0,T]} \E |X_s^{(n)}|^\theta<\infty,\ \ n\ge 1.\end{equation}
Moreover,   letting
$$\xi_t^{(n)}:= X_t^{(n+1)}- X_t^{(n)},\ \ \LL_t^{(n)}:= \si_t(X_t^{(n+1)},\mu_t^{(n)})- \si_t(X_t^{(n)},\mu_t^{(n-1)}),$$
\cite[(2.11)]{FYW1} implies
$$\d |\xi_t^{(n)}|^2 \le 2 \<\LL_t^{(n)}\d W_t, \xi_t^{(n)}\> + K_0  \big\{|\xi_t^{(n)}|^2 + \W_\theta(\mu_t^{(n)}, \mu_t^{(n-1)})^2\big\}\d t,\ \ n\ge 1, t\in [0,T] $$
 for some constant $K_0>0$. Since $\xi_0^{(n)}=0$,  it follows that
\beg{align*} \E|\xi_t^{(n)}|^2 &\le \int_0^t K_0 \e^{K_0(t-s)} \W_\theta(\mu_s^{(n)},\mu_s^{(n-1)})^2\d s\\
&\le t K_0\e^{K_0T} \sup_{s\in [0,t]}\big(\E |\xi_t^{(n-1)}|^\theta\big)^{\ff 2 \theta},  \ \ t\in [0,T], n\ge 1.\end{align*} Since $\theta<2$, by Jensen's inequality we may find out a constant $K_1>0$ such that
$$\sup_{s\in [0,t]}\E |\xi_s^{(n)}|^\theta\le K_1 t^{\ff  \theta 2}  \sup_{s\in [0,t]}\E |\xi_s^{(n-1)}|^\theta,\ \ n\ge 1, t\in [0,T].$$
So, taking $t_0\in (0, T\land K_1^{-\ff 2 \theta})$, we may find a constant $\vv\in (0,1)$ such that
$$\sup_{s\in [0,t]}\E |\xi_s^{(n)}|^\theta\le \vv^{n} \sup_{s\in [0,t_0]} \E |X_s^{(1)}-X_0|^\theta<\infty,\ \ n\ge 1,\in[0,t_0].$$
Therefore, for any $t\in [0,t_0]$ there exists an $\F_t$-measurable random variable $X_t$ on $\R^d$ such that
$$\lim_{n\to\infty} \sup_{t\in [0,t_0]}\W_\theta(\mu_t^{(n)},\mu_t)^\theta\le \lim_{n\to\infty} \sup_{t\in [0,t_0]} \E |X_t^{(n)}- X_t|^\theta =0,$$ where
 $\mu_t:=\L_{X_t}$. Combining this with \eqref{LIPS} and letting $n\to\infty$ in the equation
$$X_t^{(n)}= \int_0^t b_s(X_s^{(n)}, \mu_s^{(n-1)}) \d s +\int_0^t \si_s(X_s^{(n)}, \mu_s^{(n-1)}) \d W_s,\ \ n\ge 1, t\in [0,t_0],$$
  we derive for every $t\in [0,t_0]$,
  $$X_t= \int_0^t b_s(X_s, \mu_s) \d s +\int_0^t \si_s(X_s, \mu_s) \d W_s.$$ Thus, $(X_s)_{s\in [0,t_0]}$ has a continuous version which is a strong solution of \eqref{E1} up to time $t_0$. The uniqueness is trivial by using condition \eqref{LIPS} and It\^o's formula. \end{proof}

\section{Proofs of Theorem \ref{T1.1} and Corollary \ref{C1.2}}
\subsection{Proof of Theorem \ref{T1.1}(1)-(2)}

  According to \cite{Z2},  the condition in Theorem \ref{T1.1}(2) implies that the SDE \eqref{class} has a unique strong solution. So,
  by Lemma \ref{SS}, Theorem \ref{T1.1}(2) follows from Theorem \ref{T1.1}(1).  Below we only prove the existence of weak solution.

 By Lemma \ref{SS2}, condition (3) in $(H^\theta)$ implies that the SDE
 \beq\label{X^n} \d X^n_t= b_t^n(X^n_t,\L_{X_t^n})\d t + \si_t^n(X^n_t, \L_{X_t^n}) \d W_t,\ \ X_0^n= X_0\end{equation}
 has a unique strong solution $(X^n_t)_{t\in [0,T]}$.   So,   Lemma \ref{KK},  \eqref{con} and condition (2) in  $(H^\theta)$ imply that for any  $(p,q)\in \scr K$,
\beq\label{KRE} \E  \int_s^t f(r,X_r^n)\d r \le C(t-s)^\dd \|f\|_{L_p^q(T)},\ \ 0\le f\in L_p^q(T), n\ge 1\end{equation} holds for some constants $C>0$ and $ \dd\in (0,1).$

 We first show that Lemma \ref{PC} applies to $\psi_n:=(X^n,W)$, for which it suffices to verify conditions \eqref{Ub} and \eqref{ETC} for $\psi_n:=X^n$. By condition (2) in $(H^\theta)$ and \eqref{APP3} implied by \eqref{APP'}, there exist  constants $c_1,c_2>0$ such that
\begin{equation}\label{AP3}\beg{split}
 \E |X  ^n_t|^\theta &\leq c_1\bigg\{\E |X_0|^\theta+\E\bigg(\int_0^T|b^n_t(X^n_t,\L_{X^n_t})|\,\d t\bigg)^\theta\\
 &\ \ \ \ \ \ \ \ \  +  \E\left(\int_0^T\|\sigma^n_t(X^n_t,\L_{X^n_t})\|^2\,\d t\right)^\frac{\theta}{2}\bigg\}\\
&\leq c_2\Big(\E |X_0|^\theta + T^\theta+ \|G\|_{L^{q}_{p}(T)}^\theta + T^{\ff \theta 2}\Big) <\infty,  \ \ n\ge 1, t\in [0,T].
\end{split}\end{equation} Thus,  \eqref{Ub} holds for $\psi_n:=X^n.$

Next, by the same reason, there exists a constant $c_3>0$ such that for any $0 \leq s \leq t \leq T$,
\begin{align*}
\E |X^n_t-X^n_s|&\leq \E\int_s^t|b^n_r(X^n_r,\L_{X^n_r})|\,\d r+  \E\left(\int_s^t\|\sigma^n_r(X^n_r,\L_{X^n_r})\|^2\,\d r\right)^\frac{1}{2}\\
&\leq c_3\big(t-s + (t-s)^\dd\|G\|_{L^q_p(T)}+ (t-s)^{\frac{1}{2}}\big).
\end{align*}Hence, \eqref{ETC}   holds for $\psi_n:=X^n$.
According to  Lemma \ref{PC}, there exists a subsequence of $(X^n,W)_{n\ge 1}$, denoted again by $(X^n,W)_{n\ge 1}$, stochastic processes $(\tilde{X}^n,\tilde{W}^n)_{n\ge 1}$ and  $(\tilde{X}, \tilde{W})$ on a complete probability space $(\tilde{\OO}, \tilde{\F}, \tilde{\P})$ such that $\L_{(X^n, W)}|\P=\L_{(\tilde{X}^n, \tilde{W}^n)}|\tilde{\P}$ for any $n\geq 1$, and for any $t\in [0,T]$, $\lim_{n\to\infty}(\tilde{X}^n_t, \tt W_t^n)=(\tilde{X}_t, \tt W_t)$ in the probability $\tt\P$. As in \cite{GM}, let $\tilde{\F}^n_{t}$
be  the completion of the $\si$-algebra generated by the  $\{\tilde{X}^n_s, \tilde{W}^n_s: s\leq t\}$. Then as shown in \cite{GM},   $\tilde{X}^n_t$ is $\tilde{\F}^n_{t}$-adapted and continuous (since $X^n$ is continuous and $\L_{X^n}|\P=\L_{\tt X^n}|\tt\P$),   $\tilde{W}^n$
is a $d$-dimensional Brownian motion on $(\tt \OO, \{\tt \F_t^n\}_{t\in [0,T]},\tt\P)$,    and $(\tilde{X}^n_t,\tilde{W}^n_t)_{t\in [0,T]}$ solves the SDE
\beq\label{titlde-X^n}
\d \tilde{X}^n_t= b^n_t(\tilde{X}^n_t,\L_{\tilde{X}^n_t}|\tilde{\P})\,\d t+ \sigma^n_t(\tilde{X}^n_t,\L_{\tilde{X}^n_t}|\tilde{\P})\,\d \tilde{W}^n_t,\ \ \L_{\tilde{X}^n_0}|\tilde{\P}=\L_{X_0}|\P.
\end{equation}
  Simply denote $\L_{\tilde{X}^n_t}|\tilde{\P}=\L_{\tilde{X}^n_t}$ and $\L_{\tilde{X}_t}|\tilde{\P}=\L_{\tilde{X}_t}$.  Then $(\tilde{X_t},\tilde{W_t})_{t\in [0,T]}$ is a weak solution to \eqref{E1} provided   for any   $\varepsilon>0$,
\beq\label{(A1)} \lim_{n\to\infty}\tilde{\P}\left(\sup_{s\in[0,T]}\int_{0}^s| b^n_t(\tilde{X}^n_t,\L_{\tilde{X}^n_t})-b_t(\tilde{X}_t,\L_{\tilde{X}_t})|\,\d t\geq\varepsilon\right)=0,\end{equation}
and
\beq\label{(A2)} \lim_{n\to\infty}\tilde{\P}\left(\sup_{s\in[0,T]}\left| \int_{0}^s\si^n_t(\tilde{X}^n_t,\L_{\tilde{X}^n_t})\d\tilde{W}^n_t-\int_{0}^s\si_t(\tilde{X}_t,\L_{\tilde{X}_t})\,\d \tilde{W}_t\right|\geq\varepsilon\right)=0.\end{equation}
In the following  we   prove these two limits respectively.

\beg{proof}[Proof of \eqref{(A1)}]  For any $n\ge m\ge 1$, we have
$$\int_{0}^s | b^n_t({\tilde X}^n_t,\L_{{\tilde X}^n_t})-b_t({\tilde X}_t,\L_{{\tilde X}_t})|\,\d t\le I_1(s)+ I_2(s)+I_3(s),$$ where
\begin{align*}
&I_1(s):= \int_{0}^s| b^n_t({\tilde X}^n_t,\L_{{\tilde X}^n_t})-b^{m}_t({\tilde X}^n_t,\L_{{\tilde X}_t})|\,\d t,\\
&I_2(s):=\int_{0}^s|b^{m}_t({\tilde X}^n_t,\L_{{\tilde X}_t})-b^{m}_t({\tilde X}_t,\L_{{\tilde X}_t})|\,\d t,\\
&I_3(s):= \int_{0}^s| b^{m}_t({\tilde X}_t,\L_{{\tilde X}_t})-b_t({\tilde X}_t,\L_{{\tilde X}_t})|\,\d t.
\end{align*}
Below we estimate these $I_i(s)$ respectively.

Firstly, by Chebyshev's inequality and \eqref{KRE}, we arrive at
\begin{align*}
\tilde{\P}(\sup_{s\in[0,T]}I_1(s)\geq\frac{\varepsilon}{3})&\leq \frac{9}{\varepsilon^2}\E\int_{0}^T1_{\{|\tilde{X}^n_t|\leq R\}}| b^n_t(\tilde{X}^n_t,\tilde{\mu}^n_t)-b^{m}_t(\tilde{X}^n_t,\tilde{\mu}_t)|^2\,\d t\\
&+\frac{9}{\varepsilon^2}\E\int_{0}^T1_{\{|\tilde{X}^n_t|> R\}}| b^n_t(\tilde{X}^n_t,\tilde{\mu}^n_t)-b^{m}_t(\tilde{X}^n_t,\tilde{\mu}_t)|^2\,\d t\\
&\leq \frac{9C}{\varepsilon^2}\left(\int_{0}^T\left(\int_{|x|\leq R}|b^n_t(x,\tilde{\mu}^n_t)-b^{m}_t(x,\tilde{\mu}_t)|^{2p}\d x\right)^{q/p}\d t\right)^{\frac{1}{q}}\\
&+\frac{36K}{\varepsilon^2}\int_{0}^T\tilde{\P}(|\tilde{X}^n_t|> R)\d t+\frac{36C}{\varepsilon^2}\|G1_{\{|\cdot|>R\}}\|_{L_p^q(T)}.
\end{align*}
Since $\tilde{X}^n_t$ converges to $\tilde{X}_t$ in probability,  \eqref{AP3} implies
$$\lim_{n\to\infty} \W_\theta(\tt\mu_t^n,\mu_t) =0,$$
and $$\lim_{n\to\infty}\tilde{\P}(|\tilde{X}^n_t|> R)\leq\tilde{\P}(|\tilde{X}_t|\geq R).$$
Then it follows from $(H^\theta)$ (1) and (3) that
\begin{align*}
&\lim_{n\to\infty}|b^n_t(x,\tilde{\mu}^n_t)-b_t(x,\tilde{\mu}_t)|=0, \ \ a.e. \ \ t\in[0,T],x\in\mathbb{R}^d.
\end{align*}
So, by condition (2) in $(H^\theta)$, we may apply the dominated convergence theorem to derive
\begin{equation}\begin{split}\label{I1J}
&\limsup_{n\to\infty}\tilde{\P}(\sup_{s\in[0,T]}I_1(s)\geq\frac{\varepsilon}{4})\\
&\leq \frac{9C}{\varepsilon^2}\left(\int_{0}^T\left(\int_{|x|\leq R}|b_t(x,\tilde{\mu}_t)-b^{m}_t(x,\tilde{\mu}_t)|^{2p}\d x\right)^{q/p}\d t\right)^{\frac{1}{q}}\\
&+\frac{36K}{\varepsilon^2}\int_{0}^T\tilde{\P}(|\tilde{X}_t|\geq R)\d t+\frac{36C}{\varepsilon^2}\|G1_{\{|\cdot|>R\}}\|_{L_p^q(T)}.
\end{split}\end{equation}
Since $b^{m}$ is bounded and continuous, it follows that
\begin{align*}
&\limsup_{n\to\infty}\tilde{\P}\Big(\sup_{s\in[0,T]}I_2(s)\geq\frac{\varepsilon}{3}\Big)\leq\limsup_{n\to\infty}\frac{3}{\varepsilon}
\E\int_{0}^T|b^{m}_t(\tilde{X}^n_t,\L_{\tilde{X}_t})-b^{m}_t(\tilde{X}_t,\L_{\tilde{X}_t})|\,\d t=0.
\end{align*}

Finally, since $\tt X^n_t\to \tt X_t$ in probability, estimate \eqref{KRE} also holds for $\tt X$ replacing $\tt X^n$.   Therefore, inequality \eqref{I1J} holds for $I_3$ replacing $I_1$. In conclusion, we arrive at
\begin{align*}
&\limsup_{n\to\infty}\tilde{\P}\Big(\sup_{s\in[0,T]}\int_{0}^s| b^n_t(\tilde{X}^n_t,\L_{\tilde{X}^n_t})-b_t(\tilde{X}_t,\L_{\tilde{X}_t})|\,\d t\geq\varepsilon\Big)\\
&\leq \limsup_{n\to\infty}\sum_{i=1}^3\tilde{\P}\Big(\sup_{s\in[0,T]}I_i(s)\geq\frac{\varepsilon}{3}\Big)\\
&\leq \frac{18C}{\varepsilon^2}\left(\int_{0}^T\left(\int_{|x|\leq R}|b_t(x,\tilde{\mu}_t)-b^{m}_t(x,\tilde{\mu}_t)|^{2p}\d x\right)^{q/p}\d t\right)^{\frac{1}{q}}\\
&+\frac{72K}{\varepsilon^2}\int_{0}^T\tilde{\P}(|\tilde{X}_t|\geq R)\d t+\frac{72C}{\varepsilon^2}\|G1_{\{|\cdot|>R\}}\|_{L_p^q(T)}.
\end{align*}
for any $m>0$ and $R>0$. Then letting first $m\to\infty$ and then $R\to\infty$,
due to (1) and (2) in $(H^\theta)$, we obtain from the dominated convergence theorem that
\begin{align*}
&\limsup_{n\to\infty}\tilde{\P}\Big(\sup_{s\in[0,T]}\int_{0}^s| b^n_t(\tilde{X}^n_t,\L_{\tilde{X}^n_t})-b_t(\tilde{X}_t,\L_{\tilde{X}_t})|\,\d t\geq\varepsilon\Big)=0.
\end{align*}\end{proof}

\beg{proof}[Proof of \eqref{(A2)}]  For any $n\ge m\ge 1$ we have
\begin{align*}
&\left| \int_{0}^s\si^n_t(\tilde{X}^n_t,\L_{\tilde{X}^n_t})\d\tilde{W}^n_t-\int_{0}^s\si_t(\tilde{X}_t,\L_{\tilde{X}_t})\,\d \tilde{W}_t\right|\\
&\leq\left| \int_{0}^s\si^n_t(\tilde{X}^n_t,\L_{\tilde{X}^n_t})\d\tilde{W}^n_t-\int_{0}^s\si^{m}_t(\tilde{X}^n_t,\L_{\tilde{X}^{m}_t})\,\d \tilde{W}^n_t\right|\\
&+\left| \int_{0}^s\si^{m}_t(\tilde{X}^n_t,\L_{\tilde{X}^{m}_t})\d\tilde{W}^n_t-\int_{0}^s\si^{m}_t(\tilde{X}_t,\L_{\tilde{X}^{m}_t})\,\d \tilde{W}_t\right|\\
&+\left| \int_{0}^s\si^{m}_t(\tilde{X}_t,\L_{\tilde{X}^{m}_t})\d\tilde{W}_t-\int_{0}^s\si_t(\tilde{X}_t,\L_{\tilde{X}_t})\,\d \tilde{W}_t\right|\\
&=:J_1(s)+J_2(s)+J_3(s).
\end{align*}
By Chebyshev's inequality, BDG inequality  and \eqref{KRE},    we have
\begin{align*}
\tilde{\P}\Big(\sup_{s\in[0,T]}J_1(s)\geq\frac{\varepsilon}{3}\Big)&\leq \frac{9}{\varepsilon^2}\E\int_{0}^T1_{\{|\tilde{X}^n_t|\leq R\}}\| \sigma^n_t(\tilde{X}^n_t,\L_{\tilde{X}^n_t})-\sigma^{m}_t(\tilde{X}^n_t,\L_{\tilde{X}^{m}_t})\|_{HS}^2\,\d t\\
&+ \frac{9}{\varepsilon^2}\E\int_{0}^T1_{\{|\tilde{X}^n_t|>R\}}\| \sigma^n_t(\tilde{X}^n_t,\L_{\tilde{X}^n_t})-\sigma^{m}_t(\tilde{X}^n_t,\L_{\tilde{X}^{m}_t})\|_{HS}^2\,\d t\\
&\leq \frac{9C}{\varepsilon^2}\left(\int_{0}^T\left(\int_{|x|\leq R}\|\si^n_t(x,\tilde{\mu}^n_t)-\si^{m}_t(x,\tilde{\mu}^{m}_t)\|_{HS}^{2p}\d x\right)^{\frac{q}{p}}\d t\right)^{\frac{1}{q}}\\
&+\frac{18dK}{\varepsilon^2}\int_{0}^T\tilde{\P}(|\tilde{X}^n_t|> R)\d t.
\end{align*}
By condition (1) in $(H^\theta)$, and $\tt\mu^n_t\to\tt\mu_t$ in $\scr P_\theta$ as observed above, we have
\begin{align*}
&\lim_{n\to\infty}\|\si^n_t(x,\tilde{\mu}^n_t)-\si_t(x,\tilde{\mu}_t)\|=0,
\end{align*}
and $$\lim_{n\to\infty}\tilde{\P}(|\tilde{X}^n_t|> R)\leq\tilde{\P}(|\tilde{X}_t|\geq R).$$
So,   the dominated convergence theorem gives
\begin{equation}\begin{split}\label{I_1}
&\limsup_{n\to\infty}\tilde{\P}\Big(\sup_{s\in[0,T]}J_1(s)\geq\frac{\varepsilon}{3}\Big)\\
&\leq \frac{9C}{\varepsilon^2}\left(\int_{0}^T\left(\int_{|x|\leq R}\|\si_t(x,\tilde{\mu}_t)-\si^{m}_t(x,\tilde{\mu}^{m}_t)\|_{HS}^{2p}\d x\right)^{\frac{q}{p}}\d t\right)^{\frac{1}{q}}\\
&+\frac{18dK}{\varepsilon^2}\int_{0}^T\tilde{\P}(|\tilde{X}_t|> R)\d t.
\end{split}\end{equation}
Similarly,
\begin{align*}
& \tilde{\P}\Big(\sup_{s\in[0,T]}J_3(s)\geq\frac{\varepsilon}{3}\Big)\\
&\leq \frac{9C}{\varepsilon^2}\left(\int_{0}^T\left(\int_{|x|\leq R}\|\si_t(x,\tilde{\mu}_t)-\si^{m}_t(x,\tilde{\mu}^{m}_t)\|_{HS}^{2p}\d x\right)^{\frac{q}{p}}\d t\right)^{\frac{1}{q}}\\
&+\frac{18dK}{\varepsilon^2}\int_{0}^T\tilde{\P}(|\tilde{X}_t|> R)\d t.
\end{align*}
So, applying Lemma \ref{SL} to
\begin{align*}
\eta_n(t):=\si^{m}_t(\tilde{X}^n_t,\tilde{\mu}^{m}_t), \ \ \eta(t):=\si^{m}_t(\tilde{X}_t,\tilde{\mu}^{m}_t),
\end{align*}
we conclude that when $n\to\infty$,
$$ \int_{0}^s\si^{m}_t(\tilde{X}^n_t,\L_{\tilde{X}^{m}_t})\d\tilde{W}^n_t\to\int_{0}^s\si^{m}_t(\tilde{X}_t,\L_{\tilde{X}^{m}_t})\,\d \tilde{W}_t$$
in probability $\tt \P$, uniformly in $s\in[0, T ]$. Hence,
\begin{align*}
&\lim_{n\to\infty}\tilde{\P}\left(\sup_{s\in[0,T]}\left| \int_{0}^s\si^n_t(\tilde{X}^n_t,\L_{\tilde{X}^n_t})\d\tilde{W}^n_t-\int_{0}^s\si_t(\tilde{X}_t,\L_{\tilde{X}_t})\,\d \tilde{W}_t\right|\geq\varepsilon\right)\\
&\leq \frac{18C}{\varepsilon^2}\left(\int_{0}^T\left(\int_{|x|\leq R}\|\si_t(x,\tilde{\mu}_t)-\si^{m}_t(x,\tilde{\mu}^{m}_t)\|_{HS}^{2p}\d x\right)^{\frac{q}{p}}\d t\right)^{\frac{1}{q}}\\
&+\frac{36dK}{\varepsilon^2}\int_{0}^T\tilde{\P}(|\tilde{X}_t|> R)\d t.
\end{align*}
Letting first $m\to\infty$ and then $R\to\infty$, we prove that when $n\to\infty$,
$$ \int_{0}^s\si^{n}_t(\tilde{X}^n_t,\L_{\tilde{X}^{n}_t})\d\tilde{W}^n_t\to\int_{0}^s\si_t(\tilde{X}_t,\L_{\tilde{X}_t})\,\d \tilde{W}_t$$
in probability $\tt\P$, uniformly in $s\in[0, T ]$.\end{proof}

\subsection{Proof of Theorem \ref{T1.1}(3)}
We will use the following result for the maximal operator:
\begin{align}\label{max}
\M h(x):=\sup_{r>0}\frac{1}{|B(x,r)|}\int_{B(x,r)}h(y)\d y,\ \  h\in L^1_{loc}(\mathbb{R}^d), x\in \R^d,
 \end{align}
 where $B(x,r):=\{y: |x-y|<r\},$  see   \cite[Appendix A]{CD}.

 \begin{lem} \label{Hardy}  There exists a constant $C>0$ such that for any continuous and weak differentiable function $f$,
 \beq\label{HH1}
|f(x)-f(y)|\leq C|x-y|(\M |\nabla f|(x)+\M |\nabla f|(y)),\ \  {\rm a.e.}\ x,y\in\R^d.\end{equation}
Moreover, for any $p>1$,    there exists a constant $C_{p}>0$ such that
\beq\label{HH2}
\|\M f\|_{L^p}\leq C_{p}\|f\|_{L^p},\ \ f\in L^p(\R^d).
 \end{equation}
\end{lem}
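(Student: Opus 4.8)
The statement to prove is Lemma~\ref{Hardy}, consisting of the Hajł-Morrey--Sobolev type pointwise bound \eqref{HH1} and the Hardy--Littlewood maximal inequality \eqref{HH2}. Both are classical, so my plan is to prove \eqref{HH1} from scratch by an elementary averaging argument and to cite the standard reference for \eqref{HH2}, as the paper already points to \cite[Appendix A]{CD}.

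\textbf{Proof of \eqref{HH1}.} The plan is the following. Fix Lebesgue points $x,y$ of $f$ (and of $|\nabla f|$), set $r=|x-y|$, and let $z=\tfrac{x+y}{2}$, $\rho = r$. First I would write, using the fundamental theorem of calculus along segments and averaging, the standard estimate
\[
\Big| f(x) - \fint_{B(z,\rho)} f(w)\,\d w \Big| \le C \int_{B(z,\rho)} \frac{|\nabla f(w)|}{|x-w|^{d-1}}\,\d w,
\]
which is the classical potential estimate (e.g.\ obtained by integrating $\frac{\d}{\d t} f(x + t(w-x))$ in $t\in[0,1]$ and then in $w$ over the ball). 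The same bound holds with $x$ replaced by $y$. Subtracting, $|f(x)-f(y)|$ is controlled by the sum of the two Riesz-potential-type integrals. Then I would dyadically decompose each integral: on the annulus $\{w: |x-w|\sim 2^{-j}\rho\}\cap B(z,\rho)$ the kernel $|x-w|^{-(d-1)}$ is comparable to $(2^{-j}\rho)^{-(d-1)}$, the annulus has measure $\lesssim (2^{-j}\rho)^d$, and the average of $|\nabla f|$ over the ball $B(x, 2^{-j}\rho)$ is $\le \M|\nabla f|(x)$; summing the geometric series in $j\ge 0$ gives
\[
\int_{B(z,\rho)} \frac{|\nabla f(w)|}{|x-w|^{d-1}}\,\d w \le C\rho\, \M|\nabla f|(x) = C|x-y|\,\M|\nabla f|(x).
\]
Combining the two halves yields \eqref{HH1}. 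The one point requiring a touch of care is that the segment/ball used to connect $f(x)$ to the average must stay in a fixed ball containing both $x$ and $y$ so that the same average is subtracted in both estimates; taking the ball centered at the midpoint with radius $|x-y|$ does this. For a cleaner write-up one could instead invoke the standard inequality $\big|f(x)-f(y)\big|\le C|x-y|\big(\M|\nabla f|(x)+\M|\nabla f|(y)\big)$ directly from \cite[Appendix A]{CD} or a textbook; I would state the dyadic argument briefly and refer to the literature for the routine potential estimate.

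\textbf{Proof of \eqref{HH2}.} This is exactly the Hardy--Littlewood--Wiener maximal theorem: for $p>1$ the (centered) maximal operator $\M$ is bounded on $L^p(\R^d)$, with $\|\M f\|_{L^p}\le C_p\|f\|_{L^p}$. I would simply cite \cite[Appendix A]{CD} (or a standard source such as Stein's book), since the paper has already signalled this reference. No new argument is needed here.

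\textbf{Main obstacle.} There is no deep obstacle — both facts are classical. The only genuinely substantive point is making the pointwise estimate \eqref{HH1} rigorous at the level of Lebesgue points (so that ``a.e.'' is justified) and arranging the geometry so a common average is subtracted in the two potential estimates; everything else is bookkeeping with dyadic annuli and a geometric series, which I would not grind through in detail but rather summarize and reference.
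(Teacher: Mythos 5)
Your sketch is correct and coincides with the classical argument; the paper itself does not prove Lemma~\ref{Hardy} but simply cites \cite[Appendix A]{CD}, which contains exactly the Morrey potential estimate plus dyadic-annulus decomposition for \eqref{HH1} and the Hardy--Littlewood--Wiener theorem for \eqref{HH2} that you outline. So your proposal reproduces the intended source proof rather than diverging from it.
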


 Let $X$ and $Y$ be two solutions to \eqref{E1} with $X_0=Y_0$, and let $\mu_t=\L_{X_t}, \nu_t=\L_{Y_t}, t\in [0,T].$ Then $\mu_0=\nu_0$.
Let $$b_t^\mu(x)= b_t(x, \mu_t),\ \ \ \si_t^\mu(x)= \si_t(x,\mu_t),\ \ (t,x)\in [0,T]\times\R^d,$$ and define  $b_t^\nu, \si_t^\nu$ in the same way using $\nu_t$ replacing $\mu_t$. Then
\beq\label{E1'}\beg{split}
&\d X_t= b^\mu_t(X_t)\,\d t+ \sigma^\mu_t(X_t)\,\d W_t,\\
&\d Y_t= b_t^\nu(Y_t)\d t +\si_t^\nu(Y_t) \d W_t.\end{split}
\end{equation}
For any $\lambda>0$,    consider the following PDE for $u: [0,T]\times\R^d\to \R^d$:
\beq\label{PDE}
\frac{\partial u_t}{\partial t}+\frac{1}{2}\mathrm{Tr} (\sigma^\mu_t(\sigma_t^\mu)^\ast\nabla^2u_t)+\nabla_{b_t^\mu}u_t+b_t^\mu=\lambda u_t,\ \ u_T=0.
\end{equation} By Lemma \ref{KK} and  \cite[Theorem 3.1]{Z3},
when $\ll$ is large enough   \eqref{PDE} has a unique solution $\mathbf{u}^{\lambda,\mu}$ satisfying
\begin{align}\label{u0}
\|\nabla \mathbf{u}^{\lambda,\mu}\|_{\infty}\leq \frac{1}{5},
\end{align}
and \beq\label{u01} \|\nabla^2 \mathbf{u}^{\lambda,\mu}\|_{L^{2q}_{2p}(T)}<\infty.\end{equation}
Let $\theta^{\lambda,\mu}_t(x)=x+\mathbf{u}^{\lambda,\mu}_t(x)$. By \eqref{E1'}, \eqref{PDE} and It\^o's formula (see \cite{Z2} for more details), we have
\beq\label{E-X}
\d \theta^{\lambda,\mu}_t(X_t)= \lambda \mathbf{u}^{\lambda,\mu}_t(X_t)\d t+ (\nabla\theta_t^{\lambda,\mu}\sigma^\mu_t)(X_t)\,\d W_t,
\end{equation}
and
\beq\begin{split}\label{E-Y}
\d \theta^{\lambda,\mu}_t(Y_t)&=\lambda \mathbf{u}^{\lambda,\mu}_t(Y_t)\d t+(\nabla\theta_t^{\lambda,\mu}\sigma^\nu_t)(Y_t)\,\d W_t
+[\nabla\theta_t^{\lambda,\mu}(b^\nu_t-b^\mu_t)](Y_t)\d t\\
&+\frac{1}{2}\mathrm{Tr} [(\sigma^\nu_t(\sigma^\nu_t)^\ast-\sigma^\mu_t(\sigma^\mu_t)^\ast)\nabla^2\mathbf{u}^{\lambda,\mu}_t](Y_t)\d t.
\end{split}\end{equation}
  Let $\xi_t=\theta^{\lambda,\mu}_t(X_t)-\theta^{\lambda,\mu}_t(Y_t)$. By \eqref{E-X}, \eqref{E-Y} and It\^o's formula, we obtain
\begin{equation*}\begin{split}
\d|\xi_t|^2
=&2\lambda\left<\xi_t,\mathbf{u}^{\lambda,\mu}_t(X_t)-\mathbf{u}^{\lambda,\mu}_t(Y_t)\right\>\d t\\
&+2\left\<\xi_t,[(\nabla\theta_t^{\lambda,\mu}\sigma^\mu_t)(X_t)-(\nabla\theta_t^{\lambda,\mu}\sigma^\nu_t)(Y_t)]\d W_t\right\>\\
&+\left\|(\nabla\theta_t^{\lambda,\mu}\sigma^\mu_t)(X_t)-(\nabla\theta_t^{\lambda,\mu}\sigma^\nu_t)(Y_t)\right\|^2_{HS}\,\d t\\
&-2\left\<\xi_t, [\nabla\theta_t^{\lambda,\mu}(b^\nu_t-b^\mu_t)](Y_t)\right\>\d t\\
&-\left\<\xi_t,\mathrm{Tr} [(\sigma^\nu_t(\sigma^\nu_t)^\ast-\sigma^\mu_t(\sigma^\mu_t)^\ast)\nabla^2\mathbf{u}^{\lambda,\mu}_t](Y_t)\right\>\d t.
\end{split}\end{equation*}So, for any $m\ge 1$,
\beq\label{NN1}\beg{split}
\d|\xi_t|^{2m}
 =\, &2m\lambda|\xi_t|^{2(m-1)}\left<\xi_t,\mathbf{u}^{\lambda,\mu}_t(X_t)-\mathbf{u}^{\lambda,\mu}_t(Y_t)\right\>\d t\\
&+2m|\xi_t|^{2(m-1)}\left\<\xi_t,[(\nabla\theta_t^{\lambda,\mu}\sigma^\mu_t)(X_t)-(\nabla\theta_t^{\lambda,\mu}\sigma^\nu_t)(Y_t)]\d W_t\right\>\\
&+m|\xi_t|^{2(m-1)}\left\|(\nabla\theta_t^{\lambda,\mu}\sigma^\mu_t)(X_t)-(\nabla\theta_t^{\lambda,\mu}\sigma^\nu_t)(Y_t)\right\|^2_{HS}\,\d t\\
&+2m(m-1) |\xi_t|^{2(m-2)}\left|[(\nabla\theta_t^{\lambda,\mu}\sigma^\mu_t)(X_t)-(\nabla\theta_t^{\lambda,\mu}\sigma^\nu_t)(Y_t)]^\ast\xi_t \right|^2\d t\\
&-2m|\xi_t|^{2(m-1)}\left\<\xi_t, [\nabla\theta_t^{\lambda,\mu}(b^\nu_t-b^\mu_t)](Y_t)\right\>\d t\\
&-m|\xi_t|^{2(m-1)}\left\<\xi_t,\mathrm{Tr} [(\sigma^\nu_t(\sigma^\nu_t)^\ast-\sigma^\mu_t(\sigma^\mu_t)^\ast)\nabla^2\mathbf{u}^{\lambda,\mu}_t](Y_t)\right\>\d t.\end{split}\end{equation}
By \eqref{u0}, \eqref{LIP}, Lemma \ref{Hardy}, and noting that the distributions of $X_t$ and $Y_t$ are   absolutely continuous with respect to the Lebesgue measure, we may find out a constant $c_1>0$ such that
\beq\label{XPP1}|\xi_t|^{2(m-1)} |\xi_t|\cdot|\mathbf{u}^{\lambda,\mu}_t(X_t)-\mathbf{u}^{\lambda,\mu}_t(Y_t)|\le c_1 |\xi_t|^{2m},\end{equation}
\beq\label{XPP2}\beg{split} &|\xi_t|^{2(m-2)} \left|[(\nabla\theta_t^{\lambda,\mu}\sigma^\mu_t)(X_t)-(\nabla\theta_t^{\lambda,\mu}\sigma^\nu_t)(Y_t)]^\ast\xi_t \right|^2\\
  &\le |\xi_t|^{2(m-1)}\left\|(\nabla\theta_t^{\lambda,\mu}\sigma^\mu_t)(X_t)-(\nabla\theta_t^{\lambda,\mu}\sigma^\nu_t)(Y_t)\right\|^2_{HS}\\
&\le |\xi_t|^{2(m-1)} \Big\{C |\xi_t| \scr M\big(\|\nn^2\theta_t^{\ll,\mu}\|+\|\nabla\si_t^\mu\|\big)(X_t)\\
 &\qquad\qquad\qquad + C |\xi_t| \scr M\big(\|\nn^2\theta_t^{\ll,\mu}\|+\|\nabla\si_t^\mu\|\big)(Y_t)+ \W_\theta(\mu_t,\nu_t)\Big\}^2\\
&\le c_1|\xi_t|^{2m} \big\{\scr M\big(\|\nn^2\theta_t^{\ll,\mu}\|+\|\nabla\si_t^\mu\|\big)(X_t) +   \scr M\big(\|\nn^2\theta_t^{\ll,\mu}\|+\|\nabla\si_t^\mu\|\big)(Y_t)\big\}^2\\
&\quad + c_1 |\xi_t|^{2m}+c_1\W_\theta(\mu_t,\nu_t)^{2m},\end{split}\end{equation}
\beq\label{XPP3} \beg{split}&|\xi_t|^{2(m-1)} |\xi_t| \cdot |\{\nn\theta_t^{\ll,\mu}(b_t^\nu-b_t^\mu)\}(Y_t)|\\
&\le L \|\nn\theta^{\ll,\mu}\|_{T,\infty }|\xi_t|^{2(m-1)}|\xi_t| \W_\theta(\mu_t,\nu_t)\le c_1\big(|\xi_t|^{2m} +\W_\theta(\mu_t,\nu_t)^{2m}\big), \end{split}\end{equation} and
  for some constants $c_0,c_1>0$
\beq\label{XPP4}\beg{split} &|\xi_t|^{2(m-1)}|\xi_t|\cdot \big|\mathrm{Tr} [(\sigma^\nu_t(\sigma^\nu_t)^\ast-\sigma^\mu_t(\sigma^\mu_t)^\ast)\nabla^2\mathbf{u}^{\lambda,\mu}_t](Y_t)\big|\\
&\le c_0 |\xi_t|^{2m-1} \W_\theta(\mu_t,\nu_t) \|\nn^2\mathbf{u}^{\lambda,\mu}_t\|(Y_t)\\
&\le c_1 |\xi_t|^{2m}|\|\nn^2\mathbf{u}^{\lambda,\mu}_t\|^{\ff{2m}{2m-1}}(Y_t) + c_1\W_\theta(\mu_t,\nu_t)^{2m}.\end{split}\end{equation}
Combining \eqref{XPP1}-\eqref{XPP4} with \eqref{NN1}, and noting that $\ff{2m}{2m-1}\le 2$, we arrive at
\beq\label{NNP}\d |\xi_t|^{2m} \le c_2 |\xi_t|^{2m} \d A_t + c_2 \W_\theta(\mu_t,\nu_t)^{2m}\d t + \d M_t\end{equation}
for some constant $c_2>0$, a local martingale $M_t$,  and
\begin{align*}
A_t:=\int_0^t\Big\{&1+ |\nn^2{\mathbf u}_s^{\ll,\mu}(Y_s)|^2 +\big(\scr M\big(\|\nn^2\theta_s^{\ll,\mu}\|+\|\nn\si_s^\mu\|\big)(X_s)  \\
&+  \scr M\big(\|\nn^2\theta_s^{\ll,\mu}\|+\|\nn\si_s^\mu\|\big)(Y_s)\big)^2\Big\}\d s.
\end{align*}
By the stochastic Gronwall lemma due to \cite[Lemma 3.8]{XZ}, when $2m>\theta$ this implies
\beq\label{NN2}\W_\theta(\mu_t,\nu_t)^{2m}\le (\E |\xi_t|^\theta)^{\ff{2m}\theta} \le c_2\big(\E\e^{\ff{c_2\theta}{2m-\theta}A_t}\big)^{\ff{2m-\theta}{\theta}} \int_0^t\W_\theta(\mu_s,\nu_s)^{2m}\d s,\ \ t\in [0,T].\end{equation}
Since by Lemma \ref{KK}, \eqref{HH2}, \eqref{u01} and the Khasminskii type estimate, see for instance \cite[Lemma 3.5]{XZ}, we have
$$\E\e^{\ff{c_2\theta}{2m-\theta}A_T}<\infty,$$
so that by Gronwall's lemma we prove $\W_\theta(\mu_t,\nu_t)=0$ for all $t\in [0,T].$ Then by \eqref{E1'} both $X_t$ and $Y_t$ solve the same SDE with coefficients $b_t^\mu$ and $\si_t^\mu$, and due to \cite{Z2}, the condition  $1_D(|b_t^\mu|^2+|\nn \si_t^\mu|^2)\in L_p^q(T)$ for compact $D\subset \R^d$ implies the pathwise uniqueness of this SDE, so we conclude that $X_t=Y_t$ for all $t\in [0,T].$

\subsection{Proof of Corollary \ref{C1.2} and Corollary \ref{C1.3}}
 \begin{proof}[Proof of Corollary \ref{C1.2}]
 We set $a_t(x,\mu) :=(\si\si^\ast)_t(x,\mu)$ for $t \in [0,T]$, and $b_t( x,\mu) := 0$, $a_t( x,\mu) :=  I$ for $t \in \R\setminus [0,T]$.
Let $0\le \rr\in C_0^\infty(\R\times\R^d)$ with support contained in $\{(r,x): |(r,x)|\le 1\}$ such that $\int_{\R\times\R^d} \rr(r,x)\d r\d x=1.$
For any $n\ge 1$, let $\rr_n(r,x)= n^{d+1} \rr(nr, nx)$ and define
\begin{equation}\begin{split}\label{approx}
&a^n_t(x,\mu)=\int_{\R\times\R^d} \sigma_s\sigma^\ast_s(x',\mu) \rr_n (t-s, x-x')\d s \d x',\\
&b^n_t(x,\mu)=\int_{\R\times\R^d} b_s(x',\mu) \rr_n (t-s, x-x')\d s \d x',\ \ (t,x,\mu)\in \R\times\R^d\times\scr P.
\end{split}\end{equation}
 Let $\hat{\sigma}_t^n=\ss{a^n_t}$ and $\hat{\sigma}_t=\ss{a_t}$. Consider the following SDE:
\beq\label{E1''} \d X_t= b_t(X_t, \L_{X_t})\d t +\hat{\si}_t(X_t, \L_{X_t})\d W_t.\end{equation}
We first show that  $(b,\hat{\si})$ satisfies assumption $(H^\theta)$. Firstly, \eqref{EX1}-\eqref{EX2} and the continuity in the third variable of $B$ and $\Sigma$ imply that $b$ and $\sigma$ are continuous in the third variable $\mu\in \scr P_\theta$. Thus, (1) in $(H^\theta)$ holds. As to $(H^\theta)$ (2),
since by \cite {Z2}, it holds that $$\lim_{n\to\infty}\|F-F\ast\rho_n\|_{L^q_p(T)}=0,$$ there exists a subsequence $n_k$ such that
 $$\|F-F\ast\rho_{n_k}\|_{L^q_p(T)}<2^{-k}.$$ Letting $$G=\sum_{k=1}^{\infty}|F-F\ast\rho_{n_k}|+F,$$ then $\|G\|_{L^q_p(T)}\leq 1+\|F\|_{L^q_p(T)}$ and noting $|b^{n_k}|^2\leq K+F\ast\rho_{n_k}$, we have $|b^{n_k}|^2\leq K+G$. So, using the subsequence $b^{n_k}$ replacing $b^n$,   we verify condition (2) in $(H^\theta)$.
Finally, by \eqref{EX1}, for any $n\ge 1$ there exists a constant  $c_n>0$ such that
\begin{align*}   |b_t^n(x,\mu)-b_s^n(x',\nu)|+ \|\hat{\sigma}_t^n(x,\mu)-\hat{\sigma}_s^n(x',\nu)\|
 \le c_n \big(|t-s|+|x-x'| + \W_1(\mu,\nu)\big)
\end{align*}
holds for all $s,t\in \R, x,x'\in \R^d$ and $\mu,\nu\in \scr P_1$. So, for any $\theta\ge 1,$ condition (3) in $(H^\theta)$ holds. By Theorem \ref{T1.1} (1), SDE \eqref{E1''} has a weak solution.  Noting that $\sigma\sigma^\ast=\hat{\sigma}\hat{\sigma}^\ast$, the SDE \eqref{E1} also has a weak solution.
Finally, the strong existence and uniqueness   follow from Theorem \ref{T1.1} (2) and (3).
\end{proof}
 \begin{proof}[Proof of Corollary \ref{C1.3}]  Let $b_t^n$ and $a_t^n$ be in \eqref{approx}, and let $\hat{\sigma}_t^n=\ss{a^n_t}$ and $\hat{\sigma}_t=\ss{a_t}$. Then \eqref{LIP} and \eqref{approx} imply $(b,\hat{\sigma})$ satisfy $H^\theta$. Then we may complete the proof  as in the proof of Corollary \ref{C1.3} (1).
 \end{proof}
 \section{Proofs of Theorems \ref{T3.1} and \ref{T5.1}}

 \subsection{Proof of Theorem \ref{T3.1}}
 According to \cite[Theorem 1.2 (2)]{WZ'} for $d_1=0$,   Corollary \ref{C1.3}, and Lemma \ref{SS},     {\bf(H)} implies the existence and uniqueness of solution to \eqref{E1}.  For any $\mu\in \scr P_2$ we let $\mu_t=P_t^*\mu$ be the distribution of $X_t$
which solves \eqref{E11} with $\L_{X_0}=\mu.$

We first figure out the outline of proof using coupling by change of measure as in \cite{W11,Wbook}.  From now on,
we fix $t_0\in (0,T]$ and   $\mu_0,\nu_0\in \scr P_2$, and take  $\F_0$-measurable variables $X_0$ and $Y_0$ in $\R^d$ such that $\L_{X_0}=\mu_0, \L_{Y_0}=\nu_0$ and
\beq\label{I1} \E |X_0-Y_0|^2 = \W_2(\mu_0,\nu_0)^2.\end{equation}
Let $X_t$  with $\L_{X_0}=\mu_0$ solve \eqref{E11}, we have
\beq\label{CP1} \d X_t= b_t(X_t,\mu_t)\d t+\si_t(X_t)\d W_t.\end{equation}
To establish the log-Harnack inequality, We construct  a process  $Y_t$ such that for a weighted probability measure $\Q:=R\P$
\beq\label{CP2} X_{t_0}=Y_{t_0}\ \Q\text{-a.s., \ \ and}\ \L_{Y_{t_0}}|\Q=P_{t_0}^*\nu_0=:\nu_{t_0}. \end{equation}
Then
$$ (P_{t_0} f)(\nu_0)= \E_\Q[f(Y_{t_0})]=\E[R_{t_0}f(X_{t_0})],\ \ f\in \B_b(\R^d).$$ So, by Young's inequality we obtain the log-Harnack inequality:
\beq\label{LHI}\beg{split} (P_{t_0}\log f)(\nu_0)&\le \E[R_{t_0}\log R_{t_0}]+ \log\E[f(X_{t_0})]\\
&=\log (P_{t_0}f)(\mu_0)+ \E[R_{t_0}\log R_{t_0}],\ \ f\in \B_b^+(\R^d), f\geq 1.\end{split}\end{equation}

To construct the desired $Y_t$, we follow the line of \cite{WZ'} using Zvonkin's transform.
As shown in \cite[Theorem 3.10]{WZ'} for $d_1=0$ that Assumption {\bf(H)}  implies that for large enough $\ll>0$, the PDE \eqref{PDE} has a unique solution $\mathbf{u}^{\lambda,\mu}$ satisfying
\begin{align}\label{u}
\|\mathbf{u}^{\lambda,\mu}\|_{\infty}+\|\nabla \mathbf{u}^{\lambda,\mu}\|_{\infty}+\|\nabla^2 \mathbf{u}^{\lambda,\mu}\|_{\infty}\leq \frac{1}{5}.
\end{align} $\|\nn^2\mathbf{u}^{\lambda,\mu}\|_{\infty}<\infty$ together with the Lipschitzian continuity of $\sigma$ implies that the increasing process $A_t$ in \eqref{NNP} satisfies
$$\d A_t\le c \d t$$ for some constant $c>0$.
Moreover, $\E |\xi_t|^2\ge c'\W_2(\mu_t,\nu_t)^2$ holds for some constant $c'>0$. So, with $m=1, \theta=2, \L_{X_0}=\mu_0$ and $\L_{Y_0}=\nu_0$, the inequality \eqref{NNP} gives
\beq\label{*D*} \W_2(\mu_t,\nu_t)\le \kk \W_2(\mu_0,\nu_0),\ \ t\in [0,T]\end{equation} for some constant $\kk>0$.

As in \cite[\S2]{W11},   let $\gamma=\frac{72}{25}K+\frac{2d}{25\delta}+\frac{12\lambda}{25}$ and take
\beq\label{Xi} \zeta_t= \ff {12} {25\gamma} \Big(1-\e^{\frac{25\gamma}{16}(t-t_0)}\Big),\ \ t\in [0,t_0],\end{equation}
and let   $Y_t$ solve   the modified  SDE
\beq\label{CY} \d Y_t = \Big\{b_t(Y_t,\nu_t)+ \ff 1 {\zeta_t} \si_t(Y_t)\si_t(X_t)^{-1}(X_t-Y_t)\Big\}\d t+ \si_t(Y_t) \d W_t,\ \ t\in [0,t_0).\end{equation}
Since  $\sup_{t\in [0,T]}\nu_t(|\cdot|^2)<\infty$,   this SDE has a unique solution $(Y_t)_{t\in [0,t_0)}$. Let
$$\tau_n:=t_0\land  \inf\{t\in [0,t_0): |X_t|+|Y_t|\ge n\},\ \ n\ge 1,$$  where $\inf\emptyset :=\infty$ by convention.
We have $\tau_n\uparrow t_0$ as $n\uparrow\infty$.
To see that the process $Y$ meets the above requirement,   we first prove that
\beq\label{R} R_s:= \exp\bigg[\int_0^s \ff 1 {\zeta_t} \big\<\si_t(X_t)^{-1}(Y_t-X_t),\d W_t\big\>-\ff 1 2 \int_0^s \ff {|\si_t(X_t)^{-1}(Y_t-X_t)|^2} {\zeta_t^2}\d t\bigg]\end{equation} for $s\in [0,t_0)$ is a uniformly integrable martingale, and hence extends also to time $t_0$.

\beg{lem}\label{L3.1} Assume {\bf (A1)}-{\bf (A2)} and let $X_0,Y_0$ be two $\F_0$-measurable random variables such that $\L_{X_0}=\mu_0, \L_{Y_0}=\nu_0$, and    \beq\label{I1'} \E|X_0-Y_0|^2= \W_2(\mu_0,\nu_0)^2.\end{equation}  Then there exists a constant $c>0$  uniformly in $t_0\in (0,T)$ such that
\beq\label{ES} \sup_{t\in [0,t_0)}\E[R_t\log R_t]\le \ff c {t_0}\W_2(\mu_0,\nu_0)^2.\end{equation} Consequently,
$R_t$ extends to $t=t_0$, $\Q:= R_{t_0}\P$ is a probability measure under which $\eqref{CY}$ has a unique solution $(Y_t)_{t\in [0,t_0]}$ satisfying
\beq\label{ES'0} \Q(X_{t_0}=Y_{t_0})=1. \end{equation}
\end{lem}
\beg{proof} By {\bf (A1)}, for any $n\ge 1$ and $t\in (0,t_0)$, the process $(R_{s\land \tau_n})_{s\in [0,t]}$ is a uniformly integrable continuous martingale. So, for the first assertion it suffices to find out a constant $c>0$ uniformly in $t_0\in (0,T)$  such that
\beq\label{ES'} \sup_{n\ge 1}\E[R_{t\land\tau_n}\log R_{t\land\tau_n}]\le \ff c {t_0}  \W_2(\mu_0,\nu_0)^2,\ \ t\in [0,t_0).\end{equation}
To this end, for fixed $t\in (0,T)$ and $n\ge 1$, we consider the weighted  probability $\Q_{t,n}:= R_{t\land \tau_n}\P$.   By Girsnaov's theorem
$(\tt W_s)_{s\in [0, t\land\tau_n]}$ is a $d$-dimensional Brownian motion under $\Q_{t,n}$. Reformulating \eqref{CP1} and \eqref{CY} as
\beg{equation*}\beg{split} &\d X_s= b_s(X_s,\mu_s) - \ff{X_s-Y_s}{\zeta_s}\d s +\si_s(X_s)\d \tt W_s,\\
&\d Y_s=  b_s(Y_s,\nu_s)+   \si_s(Y_s) \d \tt W_s,\ \ s\in [0, t\land \tau_n],\end{split}\end{equation*}
where
$$\tt W_t=W_t+\int_{0}^{t}\ff 1 {\zeta_s} \si_s(X_s)^{-1}(X_s-Y_s)\d W_s.$$
Next, we fix $\lambda=\lambda_0$. Letting $\theta^{\lambda,\mu}_t(x)=x+\mathbf{u}^{\lambda,\mu}_t(x)$, combining \eqref{PDE} and It\^{o}'s formula, we arrive at
\beq\label{E-X'}
\d \theta^{\lambda,\mu}_t(X_t)= \lambda \mathbf{u}^{\lambda,\mu}_t(X_t)\d t+ (\nabla\theta_t^{\lambda,\mu}\sigma_t)(X_t)\,\d \tilde{W}_t-\nabla\theta_t^{\lambda,\mu}(X_t)\ff{X_t-Y_t}{\zeta_t}\d t,
\end{equation}
and
\beq\begin{split}\label{E-Y'}
\d \theta^{\lambda,\mu}_t(Y_t)&=\lambda \mathbf{u}^{\lambda,\mu}_t(Y_t)\d t+(\nabla\theta_t^{\lambda,\mu}\sigma_t)(Y_t)\,\d \tilde{W}_t+[\nabla\theta_t^{\lambda,\mu}(b^\nu_t-b^\mu_t)](Y_t)\d t
\end{split}\end{equation}
By  It\^o's formula under probability $\Q_{t,n}$, we obtain
\beq\begin{split}\label{EX-Y'}
&\d |\theta^{\lambda,\mu}_t(Y_t)-\theta^{\lambda,\mu}_t(X_t)|^2\\
&=2\langle\theta^{\lambda,\mu}_t(X_t)-\theta^{\lambda,\mu}_t(Y_t),\lambda \mathbf{u}^{\lambda,\mu}_t(X_t)-\lambda \mathbf{u}^{\lambda,\mu}_t(Y_t)\rangle\d t\\
&+2\langle\theta^{\lambda,\mu}_t(X_t)-\theta^{\lambda,\mu}_t(Y_t),(\nabla\theta_t^{\lambda,\mu}\sigma_t)(X_t)\d \tilde{W}_t-(\nabla\theta_t^{\lambda,\mu}\sigma_t)(Y_t)\d \tilde{W}_t\rangle\\
&+\|\nabla\theta_t^{\lambda,\mu}\sigma_t)(X_t)-\nabla\theta_t^{\lambda,\mu}\sigma_t)(Y_t)\|^2_{HS}\d t\\
&-2\langle\theta^{\lambda,\mu}_t(X_t)-\theta^{\lambda,\mu}_t(Y_t),[\nabla\theta_t^{\lambda,\mu}(b^\nu_t-b^\mu_t)](Y_t)\d t\rangle\\
&-2\Big\langle\theta^{\lambda,\mu}_t(X_t)-\theta^{\lambda,\mu}_t(Y_t),\nabla\theta_t^{\lambda,\mu}(X_t)\ff{X_t-Y_t}{\zeta_t}\d t\Big\rangle.
\end{split}\end{equation}
By \eqref{u} we have
\begin{align*}
&-\Big\langle\theta^{\lambda,\mu}_t(X_t)-\theta^{\lambda,\mu}_t(Y_t),\nabla\theta_t^{\lambda,\mu}(X_t)\ff{X_t-Y_t}{\zeta_t}\Big\rangle\\
&=-\Big\langle X_t-Y_t +\mathbf{u}^{\lambda,\mu}_t(X_t)-\mathbf{u}^{\lambda,\mu}_t(Y_t),\ff{X_t-Y_t}{\zeta_t}+\nabla \mathbf{u}_t^{\lambda,\mu}(X_t)\ff{X_t-Y_t}{\zeta_t}\Big\rangle\\
&=-\Big\langle X_t-Y_t,\ff{X_t-Y_t}{\zeta_t}\Big\rangle-\Big\langle \mathbf{u}^{\lambda,\mu}_t(X_t)-\mathbf{u}^{\lambda,\mu}_t(Y_t),\ff{X_t-Y_t}{\zeta_t}\Big\rangle\\
&-\Big\langle X_t-Y_t,\nabla \mathbf{u}_t^{\lambda,\mu}(X_t)\ff{X_t-Y_t}{\zeta_t}\Big\rangle-\Big\langle \mathbf{u}^{\lambda,\mu}_t(X_t)-\mathbf{u}^{\lambda,\mu}_t(Y_t),\nabla \mathbf{u}_t^{\lambda,\mu}(X_t)\ff{X_t-Y_t}{\zeta_t}\Big\rangle\\
&\leq -\frac{14}{25}\ff{|X_t-Y_t|^2}{\zeta_t}.
\end{align*} So,
\begin{align*} \d  |\theta^{\lambda,\mu}_s(Y_s)-\theta^{\lambda,\mu}_s(X_s)|^2&\le \Big\{\gamma|X_s-Y_s|^2 +\frac{72}{25}\kk_2(T) |X_s-Y_s|\W_2(\mu_s,\nu_s) -\frac{4}{5}\ff{|X_s-Y_s|^2}{\zeta_s}\Big\}\d s \\
&+\d M_s,\ \ s\in [0, t\land \tau_n]
\end{align*}
for some $\Q_{t,n}$-martingale $M_s$. By  \eqref{Xi} we have
$$\frac{4}{5}-\gamma\zeta_s+ \frac{16}{25}\zeta_s' =\ff {8} {25},$$
 By It\^o's formula, there exists a constant $c_2>0$ such that
Then
\beq\label{V1}\beg{split}&\d \ff{|\theta^{\lambda,\mu}_s(Y_s)-\theta^{\lambda,\mu}_s(X_s)|^2}{\zeta_s}\\
&\le \ff{\d M_s}{\zeta_s}+ c_2\W_2(\mu_s,\nu_s)^2 \d s
 -\ff{|X_s-Y_s|^2}{\zeta_s^2}\Big\{\frac{4}{5}-\gamma\zeta_s+ \frac{16}{25}\zeta_s'- \ff {1} {25}\Big\} \d s\\
 &\le \ff{\d M_s}{\zeta_s}+ c_2\W_2(\mu_s,\nu_s)^2 \d s
 -\ff{7|X_s-Y_s|^2}{25\zeta_s^2},\ \ s\in [0, t\land \tau_n].\end{split}\end{equation}
Combining   this with \eqref{*D*}, \eqref{I1} and \eqref{V1}, we arrive at
\beq\label{V2}  \E_{\Q_{t,n}} \int_0^{t\land\tau_n} \ff{|X_s-Y_s|^2}{\zeta_s^2}\d s\le \ff {c_1} {t_0}\W_2(\mu_0,\nu_0)^2,\ \ t\in[0,t_0)  \end{equation} for some constant $c_1>0$. Therefore, there exists a constant $C>0$ such that
\beg{align*} \E[R_{t\land \tau_n}\log R_{t\land \tau_n}]&= \ff 1 2 \E_{\Q_{t,n}}\int_0^{t\land\tau_n}  \ff {|\si_s(X_s)^{-1}(Y_s-X_s)|^2} {\zeta_s^2}\d s \\
&\le \ff{C}{t_0}\W_2(\mu_0,\nu_0)^2,\ \ t\in (0,t_0).\end{align*}  Thus, \eqref{ES} holds.

By \eqref{ES} and the martingale convergence theorem, $(R_{t})_{t\in [0,t_0]}$ is a uniformly integrable martingale, so $\Q:= R_{t_0}\P$ is a probability measure. By Girsanov theorem, we can reformulate \eqref{CY} as
\beq\label{ET}\d Y_t= b_t(Y_t,\nu_t)\d t +\si_t(Y_t)\d \tt W_t,\end{equation}
which has a unique solution $(Y_t)_{t\in [0,t_0]}$. By \eqref{ES},
$$\E_\Q \int_0^{t_0} \ff{|X_t-Y_t|^2}{\zeta_t^2}\d t<\infty.$$ Since $X_t-Y_t$ is continuous and $\int_0^{t_0}\ff 1 {\zeta_t}\d t=\infty$, this implies
$\Q(X_{t_0}=Y_{t_0})=1.$
 \end{proof}
\beg{proof}[Proof of Theorem \ref{T3.1}]    Consider the distribution dependent SDE
$$\d\tt X_t= b_t(\tt X_t, \scr L_{\tt X_t}|\tt \P)\d t+ \si_t(\tt X_t)\d \tt W_t,\ \  \tt X_0= Y_0.$$ By the weak uniqueness we  have
$\scr L_{\tt X_t}|\tt \P= P_t^*\nu_0= \nu_t$ for $t\in [0,t_0]$. Combining this with   \eqref{ET} and the strong uniqueness, we conclude that
$\tt X_t=Y_t$ for $t\in [0,T]$. Therefore,  \eqref{LHI} and Lemma \ref{L3.1} lead to
\beg{align*}  (P_{t_0} \log f)(\nu_0)\le \log (P_{t_0}f)(\mu_0) +\ff C {t_0}\W_2(\mu_0,\nu_0)^2,\ \ t_0\in (0,T].\end{align*}
Finally, the Harnack inequality with power \eqref{H2'} follows from \cite[Section 3.4]{Wbook}.
\end{proof}

\subsection{Proof of Theorem \ref{T5.1}}

\beg{proof}Fix $t_0>0$.
Denote $\mu_t=P_t^*\mu_0=\L_{X_t}, t\in [0,t_0]$. Then \eqref{E5} becomes
\beq\label{E5'}  \d X_t= b_t(X_t, \mu_t)\d t +\si_t(\mu_t) \d W_t,\ \ \L_{X_0}=\mu_0.\end{equation}
Let $Y_t=X_t+\ff {tv}{t_0},\ t\in [0,t_0]$. Then
$$ \d Y_t= b_t(Y_t, \mu_t)\d t +\si_t(\mu_t) \d \tt W_t,\ \ \L_{Y_0}=\mu_0, t\in [0,t_0], $$ where
\beg{align*} &\tt W_t:= W_t +\int_0^t \eta_s\d s,\\
&\eta_t:= \si_t^{-1}\Big\{\ff v {t_0} + b_t(X_t,\mu_t)-b_t\Big(X_t+\ff {tv}{t_0}, \mu_t\Big)\Big\}.\end{align*}
Let $R_{t_0}= \exp[-\int_0^{t_0} \<\eta_t, \d W_t\>-\ff 1 2\int_0^{t_0} |\eta_s|^2\d s].$ By the Girsanov theorem we obtain
$$(P_{t_0} f)(\mu_0)=\E [R_{t_0} f(Y_{t_0})]= \E[R_{t_0} f(X_{t_0}+v)]\le (P_{t_0} f^p(v+\cdot))^{\ff 1 p}(\mu_0) \big(\E R_{t_0}^{\ff p{p-1}}\big)^{\ff {p-1}p},$$
and by Young's inequality, we obtain
\begin{align*}(P_{t_0} \log f)(\mu_0)&=\E [R_{t_0} \log f(Y_{t_0})]\\
&= \E[R_{t_0} \log f(X_{t_0}+v)]\le \log P_{t_0} f(v+\cdot)(\mu_0)+ \E R_{t_0}\log R_{t_0}.\end{align*}
Then we have
\beg{align*} &\E R_{t_0}^{\ff p {p-1}}
 \le \sup_{\Omega}\e^{\ff{p}{2(p-1)^2} \int_0^{t_0} |\eta_s|^2\d s} \\
&\le \exp\bigg[\ff{p\, \int_0^{t_0} \|\si_t^{-1}\|_{\infty}^2 \big\{|v|/{t_0}+\phi(t|v|/{t_0})\big\}^2\d t}{2(p-1)^2}\bigg].\end{align*}
and
\beg{align*} &\E R_{t_0}\log R_{t_0}= \E_\Q \log R_{t_0}
 \le \frac{1}{2}\E_\Q\int_0^{t_0} |\eta_s|^2\d s\\
&\le \frac{1}{2}\int_0^{t_0} \|\si_t^{-1}\|_{\infty}^2 \big\{|v|/{t_0}+\phi(t|v|/{t_0})\big\}^2\d t.\end{align*}
\end{proof}

\end{document}